\newtheorem{theorem}{Theorem}[section]
\def \r{{\bf H}_{\R}}
\def\z{{\bf z}}
\def\w{{\bf w}}
\def\P{\mathbb P}
\def\R{\mathbb R}
\def\C{\mathbb C}
\def \c{{\bf H}_{\C}}
\def\PSL{\mathrm{PSL}}
\def\PU{\mathrm{PU}}
\newtheorem{corollary}[theorem]{Corollary}
\newtheorem{proposition}[theorem]{Proposition}
\newtheorem{lemma}[theorem]{Lemma}
\title{Free groups generated by two parabolic maps}
\author{Sagar B.~Kalane and John R.~Parker}
\address{Department of Mathematical Sciences, Indian Institute of Science Education and Research (IISER) Pune,
	Dr Homi Bhabha Rd, Ward No. 8, NCL Colony, Pashan, Pune, Maharashtra 411008}
\email{sagark327@gmail.com, sagarkalane@iiserpune.ac.in}
\address{Department of Mathematical Sciences, Durham University, Upper Mountjoy, South Road, Durham DH1 3LE, UK}
\email{j.r.parker@durham.ac.uk}
\subjclass[2020]{Primary 22E40; Secondary  51M10, 32M15, 20H10}
\keywords{Complex hyperbolic geometry, Heisenberg translations, Discreteness.}
\date{\today}
\thanks{First author is supported by NBHM post doctoral fellowship}
\begin{document}
	\maketitle
	
\begin{abstract}
In this paper we consider a group generated by two unipotent parabolic elements of ${\rm SU}(2,1)$ with distinct fixed points.
We give several conditions that guarantee the group is discrete and free. We also give a result on the diameter of a
finite ${\mathbb R}$-circle in the Heisenberg group.
\end{abstract}

\section{Introduction}
The study of free, discrete groups has a long history dating back to Schottky and Klein in the nineteenth century. We will be
particularly interested in groups generated by two unipotent parabolic maps in ${\rm SU}(2,1)$ and their action on complex 
hyperbolic space and its boundary. The conditions we give could be thought of as complex hyperbolic analogues of the
results proved by Lyndon and Ullman \cite{lyu} and by Ignatov \cite{ig78} giving conditions under which two parabolic elements of
${\rm PSL}(2,{\mathbb C})$ generate a free Kleinian group. Our work is very closely related to the well-known Riley slice of 
Schottky space, where Riley considered the space of conjugacy classes of subgroups of $\PSL(2,\C)$ generated by two 
non-commuting parabolic maps, see in \cite{ks}. In \cite{PW} Parker and Will considered a related problem, namely they also 
studied groups with two unipotent generators, but they made the additional assumption that the product of these maps is 
also unipotent. We will comment on the relationship between our results and those in \cite{PW} below.

The main theme of the paper concerns groups generated by two Heisenberg translations with distinct fixed points. We normalise
so that the fixed points are $\infty$ and $o$, the origin in the Heisenberg group. Specifically, we consider the group generated by 
	\begin{equation}\label{eq-A-B}
		A=\left(\begin{matrix} 1 & -\sqrt{2}s_1e^{-i\theta_1} & -s_1^2+it_1 \\ 0 & 1 & \sqrt{2}s_1e^{i\theta_1} \\ 
		0 & 0 & 1 \end{matrix}\right),
		\quad
		B=\left(\begin{matrix} 1 & 0 & 0 \\ \sqrt{2}s_2e^{i\theta_2} & 1 & 0 \\ -
		s_2^2+it_2 & -\sqrt{2}s_2e^{-i\theta_2} & 1 \end{matrix}\right).
	\end{equation}
From this, it might appear that the space of such pairs of transformations has dimension six, and is parameterised
by $s_j,\,t_j,\,\theta_j$ for $j=1,\,2$. In fact, it has dimension four.
There is a further normalisation we can do using the stabiliser of the pair $\{o,\infty\}$. This depends on 
$(k,\psi)\in{\mathbb R}_+\times [0,2\pi)$ which act as follows:
\begin{equation}\label{eq-action-stabiliser}
(s_1,t_1,\theta_1;s_2,t_2,\theta_2)\longmapsto (s_1k, t_1k^2, \theta_1+\psi; s_2/k, t_2/k^2,\theta_2+\psi).
\end{equation}
In order to show the symmetry in the parameters, we choose not to make this normalisation in the statement of the results. But 
in some of the proofs we use it to simplify calculations, for example by choosing $\theta_2=0$. 
	
We want to find conditions on $s_j$, $t_j$, $\theta_j$ that ensures $\langle A,B\rangle$ is discrete and freely generated by 
$A$ and $B$. To do
so, we will use Klein's combination theorem (sometimes called the ping-pong theorem) or variants of it on the boundary of complex
hyperbolic space. This is given in the following proposition.

\begin{proposition}{\label{KCT}}
	Let $A$ and $B$ be the Heisenberg translations fixing $\infty$ and $o$ respectively, given by \eqref{eq-A-B}. 
	If the fundamental domains
	$D_A\subset \partial{\bf H}^2_{\mathbb C}$ for $\langle A\rangle$ and $D_B\subset \partial{\bf H}^2_{\mathbb C}$  for 
	$\langle B\rangle$ satisfy $D_A^\circ\cap D_B^\circ\neq \emptyset$ and 
	$\overline{D}_A\cup\overline{D}_B=\partial{\bf H}^2_{\mathbb C}$, then $\langle A,B \rangle$ is free and discrete.
\end{proposition}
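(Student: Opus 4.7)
The plan is a standard ping-pong (Klein combination) argument carried out on the boundary sphere $\partial{\bf H}^2_{\mathbb C}$. I will pick a basepoint $x_0$ in the non-empty intersection $D_A^\circ\cap D_B^\circ$ and show by induction on word length that every non-trivial reduced word $w$ in $A$ and $B$ satisfies $w(x_0)\neq x_0$. This identifies $\langle A,B\rangle$ with the free product $\langle A\rangle *\langle B\rangle$, hence free of rank two, and a small thickening of the argument will deliver discreteness.

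The useful reformulation of the hypotheses is that $\overline{D}_A\cup\overline{D}_B=\partial{\bf H}^2_{\mathbb C}$ is equivalent to the pair of inclusions $\partial{\bf H}^2_{\mathbb C}\setminus D_A^\circ\subset\overline{D}_B$ and $\partial{\bf H}^2_{\mathbb C}\setminus D_B^\circ\subset\overline{D}_A$, which are exactly the ingredients of ping-pong. Now write a reduced word as $w=X_k X_{k-1}\cdots X_1$, each $X_i$ a non-zero power of $A$ or of $B$ with consecutive letters of opposite type. By induction on $i$ I claim: the partial image $(X_i\cdots X_1)(x_0)$ lies outside $D_A^\circ$ when $X_i$ is a power of $A$, and outside $D_B^\circ$ when $X_i$ is a power of $B$. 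The base step ($i=1$) uses only that $x_0$ lies in the interior of the relevant fundamental domain, together with the standard property that the $\langle A\rangle$-translates of $D_A^\circ$ have pairwise disjoint interiors (so $A^n(x_0)\in A^n(D_A^\circ)$ sits in a different tile from $D_A^\circ$ and hence outside it). For the inductive step, suppose the previous partial image lies outside $D_B^\circ$ and therefore, by the covering inclusion, in $\overline{D}_A$; applying $X_i=A^n$ with $n\neq 0$ carries $\overline{D}_A$ into the tile $A^n(\overline{D}_A)$, which is disjoint from $D_A^\circ$, so the new partial image lies outside $D_A^\circ$ as required. The symmetric case is identical. In particular $w(x_0)\notin D_A^\circ\cap D_B^\circ$, so $w(x_0)\neq x_0$ and $w\neq\mathrm{id}$.

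For discreteness, $D_A^\circ\cap D_B^\circ$ is open, so I can take a small open neighborhood $U$ of $x_0$ contained in it; the same argument with $U$ in place of $\{x_0\}$ gives $w(U)\cap U=\emptyset$ for every non-trivial $w\in\langle A,B\rangle$. If a sequence $g_n\in\langle A,B\rangle\setminus\{\mathrm{id}\}$ accumulated at the identity in $\mathrm{SU}(2,1)$, then $g_n(x_0)\to x_0$ would eventually force $g_n(U)\cap U\neq\emptyset$, a contradiction; hence $\langle A,B\rangle$ is discrete. The main point requiring care, rather than a deep obstacle, is the boundary behaviour of the fundamental domains --- specifically the disjoint-interiors assertion that $A^n(\overline{D}_A)\cap D_A^\circ=\emptyset$ for $n\neq 0$, and the analogous statement for $B$. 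This is the standard property of a fundamental domain and should be transparent from the explicit shapes of $D_A$ and $D_B$ used later in the paper.
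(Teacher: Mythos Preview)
Your ping-pong argument is correct and is the standard proof of Klein's combination theorem. The paper does not give a detailed proof of this proposition; it treats it as a known result and only sketches the idea in Section~\ref{sec-discreteness} using the equivalent four-sphere formulation (tracking that a reduced word sends the common exterior $D$ into the interior of one of $S_A^{\pm},S_B^{\pm}$), which is just a repackaging of your two-domain version.
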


Our results are also related to proofs of discreteness of complex hyperbolic isometry groups using other variations on Klein's 
combination theorem. For example, see Goldman and Parker \cite{gp1}, Wyss-Gallifent \cite{wg}, Monaghan, Parker and
Pratoussevitch \cite{mpa} or Jiang and Xie \cite{xyj}.

Our main theorem is:

\begin{theorem}\label{thm-main0}
Let $(s_1e^{i\theta_1},t_1)$ and $(s_2e^{i\theta_2},t_2)$ be non-trivial 
elements of the Heisenberg group. Here, $\theta_1$ and $\theta_2$
are only defined when $s_1\neq 0$ and $s_2\neq 0$. Let $A$ and $B$ given by \eqref{eq-A-B} be the 
associated Heisenberg translations fixing $\infty$ and $o$ respectively. Replacing one of these by its 
inverse if necessary, we suppose $-\pi/2\le (\theta_1-\theta_2)\le \pi/2$. 
If one of the following four conditions is satisfied then
$\langle A,\,B\rangle$ is discrete and freely generated by $A$ and $B$. The conditions are
\begin{enumerate}
\item[(1)] 
\begin{eqnarray*} 
|s_1^2+it_1|^{1/2}|s_2^2+it_2|^{1/2} & \ge & 
2^{1/2}\left(\left(1-\frac{t_2}{|s_2^2+it_2|}\right)^{1/3}+\left(1+\frac{t_2}{|s_2^2+it_2|}\right)^{1/3}\right)^{3/4}\\
&& \quad \times \left(\left(1-\frac{t_1}{|s_1^2+it_1|}\right)^{1/3}+\left(1+\frac{t_1}{|s_1^2+it_1|}\right)^{1/3}\right)^{3/4},
\end{eqnarray*}
\item[(2)] if $s_1\neq0$ then $\begin{displaystyle}
s_1\,|s_2^2+it_2|^{1/2} \ge \begin{cases}
\begin{displaystyle}\frac{2~{s_2}^3}{|s_2^2+it_2|^{3/2}}\,\cos(\theta_1-\theta_2)+2  \end{displaystyle} & \hbox{if } s_2\neq0, \\
2 & \hbox{if } s_2=0.
\end{cases}
\end{displaystyle}$
\item[(3)] if $s_2\neq 0$ then $\begin{displaystyle}
|s_1^2+it_1|^{1/2}\,s_2 \ge \begin{cases}
\begin{displaystyle} \frac{2~{s_1}^3}{|s_1^2+it_1|^{3/2}}\,\cos(\theta_1-\theta_2)+2 \end{displaystyle} & \hbox{if } s_1\neq0, \\
2 & \hbox{if } s_1=0.
\end{cases}
\end{displaystyle}$
\item[(4)] if both $s_1,\,s_2\neq 0$ then $s_1s_2 \ge 4\cos^3\bigl((\theta_1-\theta_2)/3\bigr)$.
\end{enumerate}
\end{theorem}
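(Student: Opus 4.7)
The plan is to invoke Proposition~\ref{KCT}, so our task is to construct fundamental domains $D_A$ for $\langle A\rangle$ and $D_B$ for $\langle B\rangle$ in $\partial{\bf H}^2_{\C}$ whose closures cover the boundary and whose interiors meet. We identify $\partial{\bf H}^2_{\C}\setminus\{\infty\}$ with the Heisenberg group $\mathfrak{H}$ equipped with the Cygan metric $d_{\mathrm{Cyg}}$, which is preserved by both $A$ and $B$. The quantity $|s_1^2+it_1|^{1/2}$ is the Cygan norm of the translation parameter of $A$, and $|s_2^2+it_2|^{1/2}$ is that of $B$.

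Since $A$ fixes $\infty$, a natural fundamental domain for $\langle A\rangle$ is a Dirichlet slab based at a reference point $p\in\mathfrak{H}$, bounded by the two Cygan bisectors $\mathrm{Bis}(p,A(p))$ and $\mathrm{Bis}(p,A^{-1}(p))$. For $B$, which fixes $o$, we first conjugate by the Heisenberg inversion $\iota$ swapping $o$ and $\infty$: the conjugate $\iota B\iota^{-1}$ fixes $\infty$, so we may construct a Dirichlet slab $\tilde D_B$ for $\langle\iota B\iota^{-1}\rangle$ as above and pull back, setting $D_B=\iota(\tilde D_B)$. This $D_B$ is bounded by two spinal-sphere-type surfaces tangent at $o$. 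The reference points for the two Dirichlet slabs are free parameters that we shall exploit below.

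The Klein combination hypothesis then reduces to the geometric statement that the slab $D_A$ and the spinal-sphere region $D_B$ together tile $\partial{\bf H}^2_{\C}$. This in turn becomes an inequality involving the Cygan norms $|s_j^2+it_j|^{1/2}$, the horizontal parts $s_j$, and the angular offset $\theta_1-\theta_2$. Each condition (1)--(4) of the theorem records this inequality for a specific choice of reference points: (1) for a symmetric choice yielding the cube-root expressions in $t_j/|s_j^2+it_j|$; (2) and (3) for asymmetric choices in which one of the horizontal parts $s_j$ dominates; and (4) for a purely horizontal choice.

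The main obstacle will be the covering step: verifying that no point of the Heisenberg group escapes both $\overline{D_A}$ and $\overline{D_B}$. This reduces to a scalar optimization whose critical value is obtained by solving a cubic equation, which accounts both for the cube roots in~(1) and for the $\cos^3((\theta_1-\theta_2)/3)$ in~(4), the latter via the trigonometric identity $4\cos^3 u=\cos 3u+3\cos u$ applied at $u=(\theta_1-\theta_2)/3$. When $\theta_1=\theta_2$, condition~(4) reduces to $s_1s_2\ge 4$, the complex hyperbolic analog of the classical Lyndon--Ullman--Ignatov bound for two parabolic generators in $\PSL(2,\C)$.
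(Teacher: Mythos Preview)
Your outline has the right top-level architecture---invoke Proposition~\ref{KCT} by building fundamental domains for $\langle A\rangle$ and $\langle B\rangle$ and checking the covering condition, using the involution $\iota$ to transfer constructions between $\infty$ and $o$---and that is indeed what the paper does. But beyond that the proposal is a plan rather than a proof, and it contains both a factual error and a structural mismatch with the argument that actually works.

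First, the error: the Cygan metric is \emph{not} preserved by $B$. Only isometries fixing $\infty$ (Heisenberg translations, rotations, dilations) are Cygan isometries; $B$ fixes $o$, and Lemma~\ref{Cygan Sphere} shows it distorts Cygan distances by an inversion-type formula. This matters because your ``Dirichlet slab'' for $B$, built from Cygan bisectors in $\mathfrak{H}$, would not be $B$-equivariant in the way you need.

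Second, the structural gap: you treat the four conditions as arising from four ``choices of reference point'' in a single bisector construction, with the sharp constants emerging from ``solving a cubic''. In the paper the four parts use genuinely different hypersurfaces, and the constants have specific geometric origins that your sketch does not supply. For part~(1) the domains are bounded by Cygan spheres (isometric spheres of $B^{\pm1}$ and of $\iota A^{\pm1}\iota$), and the cube-root expression is the Cygan diameter of a finite $\mathbb{R}$-circle (Lemma~\ref{lem-diam-Rcircle}); this is the nontrivial optimisation, and it is done on the meridian of a Cygan sphere, not by an abstract cubic. For parts~(2) and~(3) one side uses infinite \emph{fans} (Euclidean planes in $\mathfrak{N}$), the other uses isometric spheres, and the condition comes from fitting the vertical projections of two discs into a strip of width $s_1$. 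For part~(4) both sides use fans; the vertical projections of the finite fans $F^{(o)}_{\pm s_2/2}$ are \emph{cardioids}, and $4\cos^3\bigl((\theta_1-\theta_2)/3\bigr)$ is exactly the width of the narrowest strip at angle $\theta_1-\theta_2$ containing them (Lemma~\ref{lem-cardiod-in-strip}). None of these objects---$\mathbb{R}$-circle diameters, fans, cardioids---appears in your proposal, and a generic Cygan-bisector construction will not produce them. To turn your plan into a proof you would have to identify concretely what your ``Cygan bisectors'' are and carry out the optimisation; as written, the covering step is asserted rather than established.
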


Observe that the expressions above are all invariant under the action of maps that fix both fixed points. Specifically,
using the action of $(k,\psi)\in{\mathbb R}_+\times [0,2\pi)$ from \eqref{eq-action-stabiliser} the left hand side in each case
is a product of two terms, one scaling by $k$ and the other by $1/k$. Similarly, each term on the right hand side involving $(s_j,t_j)$
does not change
when we scale by $k$ and the only place $\theta_1$ and $\theta_2$ arise is via $(\theta_1-\theta_2)$, which does not change
when we add $\psi$ to both angles.

Note that, each item in the above theorem involves different techniques of the proof. The first item of the theorem follows 
by considering fundamental domains bounded by Cygan spheres (special cases of bisectors), whereas last item follows by 
considering fundamental domains bounded by two fans. The middle two items have a mix of Cygan spheres and fans.

Substituting $s_1=s_2=0$ in Theorem~\ref{thm-main0} (1) we 
obtain the following corollary,  which is well known (for example, it is implicit in Section 3 of \cite{par1} and it is written down 
explicitly in Theorem 1.1 of Xie, Wang and Jiang \cite{xwy}). 

\begin{corollary}\label{thm-main2}
Let $(0,t_1)$ and $(0,t_2)$ be elements of the Heisenberg group.  Let $A$ and $B$ given by \eqref{eq-A-B} be the 
associated (vertical) Heisenberg translations fixing $\infty$ and $o$ respectively. If 
$|t_1|^{1/2}\,|t_2|^{1/2} \ge 2$ then $\langle A,\,B\rangle$ is discrete and freely generated by $A$ and $B$. 
\end{corollary}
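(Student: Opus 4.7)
The plan is to deduce Corollary~\ref{thm-main2} directly from Theorem~\ref{thm-main0}(1) by the substitution $s_1=s_2=0$. Since Theorem~\ref{thm-main0} is taken as given, no new geometric or combinatorial argument is needed; I only have to verify that the arithmetic collapses cleanly to the bound $|t_1|^{1/2}|t_2|^{1/2}\ge 2$.

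First I would rewrite the left-hand side of the inequality in condition~(1). With $s_j=0$ we have $|s_j^2+it_j|^{1/2}=|it_j|^{1/2}=|t_j|^{1/2}$ for $j=1,2$, so the product reduces immediately to $|t_1|^{1/2}|t_2|^{1/2}$, which is the quantity appearing in the corollary.

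Next I would evaluate the two bracketed sums on the right-hand side. Since $s_j=0$, the ratio $t_j/|s_j^2+it_j|$ equals $t_j/|t_j|=\mathrm{sign}(t_j)\in\{-1,+1\}$. Whichever sign occurs, exactly one of $(1-t_j/|t_j|)^{1/3}$ and $(1+t_j/|t_j|)^{1/3}$ vanishes and the other equals $2^{1/3}$, so each bracket evaluates to $2^{1/3}$ and each factor $(\cdot)^{3/4}$ to $2^{1/4}$. Multiplying all three pieces together yields $2^{1/2}\cdot 2^{1/4}\cdot 2^{1/4}=2$, so condition~(1) becomes precisely $|t_1|^{1/2}|t_2|^{1/2}\ge 2$.

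The only subtlety I would flag is the preliminary normalisation $-\pi/2\le\theta_1-\theta_2\le\pi/2$ in Theorem~\ref{thm-main0}: the angles $\theta_j$ are undefined when $s_j=0$, but condition~(1) does not involve them at all, so that hypothesis is vacuous in this specialisation and poses no obstacle. There is no real difficulty here; the main content of the corollary sits inside Theorem~\ref{thm-main0}(1), and the corollary is essentially a sanity check recovering the known vertical-translation bound of Parker and of Xie--Wang--Jiang.
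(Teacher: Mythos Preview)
Your proposal is correct and matches the paper's approach exactly: the paper simply states that substituting $s_1=s_2=0$ into Theorem~\ref{thm-main0}(1) yields the corollary, and your computation spells out precisely that substitution. There is nothing to add.
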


Note that the right hand sides of parts (1), (2) and (3) of Theorem~\ref{thm-main0}  involve $(s_1e^{i\theta_1},t_1)$ and 
$(s_2e^{i\theta_2},t_2)$. By eliminating $s_j$ and $t_j$, we can get weakening of Theorem~\ref{thm-main0} as follows:

\begin{theorem}\label{thm-main1}
Let $(s_1e^{i\theta_1},t_1)$ and $(s_2e^{i\theta_2},t_2)$ be as in Theorem~\ref{thm-main0} and let 
$A$ and $B$ given by \eqref{eq-A-B}. If one of the following three conditions is satisfied then
$\langle A,\,B\rangle$ is discrete and freely generated by $A$ and $B$. The conditions are
\begin{enumerate}
\item[(1')] $\quad \bigl|s_1^2+it_1\bigr|^{1/2}\,\bigl|s_2^2+it_2\bigr|^{1/2}\ge 4$; 
\item[(2')] if $s_1\neq 0$ then $\quad s_1\,\bigl|s_2^2+it_2\bigr|^{1/2}\ge 4\cos^2\bigl((\theta_1-\theta_2)/2\bigr)$; 
\item[(3')] if $s_2\neq 0$ then $\quad \bigl|s_1^2+it_1\bigr|^{1/2}\, s_2 \ge 4\cos^2\bigl((\theta_1-\theta_2)/2\bigr)$.
\end{enumerate}
\end{theorem}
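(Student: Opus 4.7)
The plan is to deduce each of conditions (1'), (2'), (3') from the corresponding hypothesis of Theorem~\ref{thm-main0}; once this is done, freeness and discreteness follow immediately from that theorem. The strategy throughout is to bound the right-hand side of the stronger condition in Theorem~\ref{thm-main0} above by the simpler quantity appearing in Theorem~\ref{thm-main1}.

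For (1'), I would set $u = t_1/|s_1^2+it_1|$ and $v = t_2/|s_2^2+it_2|$, both of which lie in $[-1,1]$, and analyse the real function $f(x) = (1-x)^{1/3} + (1+x)^{1/3}$. A short check of $f''$ shows that $f$ is strictly concave on $[-1,1]$, and $f$ is even, so its maximum is attained at $x=0$ with $f(0)=2$ (at the endpoints $f(\pm 1) = 2^{1/3}$). Hence $f(u)^{3/4}f(v)^{3/4}\le 2^{3/4}\cdot 2^{3/4}=2^{3/2}$, and so the right-hand side of condition (1) in Theorem~\ref{thm-main0} is at most $\sqrt{2}\cdot 2^{3/2}=4$. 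Thus (1') implies (1), and Theorem~\ref{thm-main0}(1) gives the conclusion.

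For (2'), I would invoke the half-angle identity $4\cos^2\bigl((\theta_1-\theta_2)/2\bigr)=2+2\cos(\theta_1-\theta_2)$. If $s_2=0$ the right-hand side of (2) is $2$, and since $\theta_1-\theta_2\in[-\pi/2,\pi/2]$ we have $4\cos^2\bigl((\theta_1-\theta_2)/2\bigr)\ge 2$, so (2') implies (2) at once. If $s_2\ne 0$, it suffices to show
\[
2+2\cos(\theta_1-\theta_2)\ \ge\ 2 + \frac{2s_2^3}{|s_2^2+it_2|^{3/2}}\cos(\theta_1-\theta_2),
\]
which rearranges to $\cos(\theta_1-\theta_2)\bigl(1 - s_2^3/|s_2^2+it_2|^{3/2}\bigr)\ge 0$. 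The first factor is non-negative by the assumed range of $\theta_1-\theta_2$, and the second because $|s_2^2+it_2|^2=s_2^4+t_2^2\ge s_2^4$. Condition (3') is handled by swapping the roles of the two generators in the above argument.

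The only genuinely analytic step is the one-variable inequality $f(x)\le 2$ on $[-1,1]$; everything else reduces to a half-angle identity and the elementary bound $s_j^2\le |s_j^2+it_j|$. I do not expect any real obstacle here; the one point needing a little care is verifying that the maximum of $f$ is attained at the interior point $x=0$ rather than at the endpoints, which is why concavity combined with evenness is the right observation.
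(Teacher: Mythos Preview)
Your proposal is correct and follows essentially the same route as the paper: bound the right-hand side of each condition in Theorem~\ref{thm-main0} by the simpler quantity in Theorem~\ref{thm-main1}, using the one-variable inequality $\bigl((1-x)^{1/3}+(1+x)^{1/3}\bigr)^{3/4}\le 2^{3/4}$ for (1') and the half-angle identity together with $s_j^2\le |s_j^2+it_j|$ for (2') and (3'). Your concavity-and-evenness argument for the one-variable inequality and your explicit treatment of the $s_2=0$ subcase are slightly more detailed than the paper's terse assertions, but the underlying argument is the same.
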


\begin{proof}
First note that for $-1\le x\le 1$ we have 
$$
\bigl((1-x)^{1/3}+(1+x)^{1/3}\bigr)^{3/4}\le 2^{3/4}
$$
with equality if and only if $x=0$. Therefore, (1') follows from (1). Secondly,
$$
 \frac{2s_2^3}{|s_2^2+it_2|^{3/2}}\,\cos(\theta_1-\theta_2)+2 \le 2\cos(\theta_1-\theta_2)+2 
 = 4\cos^2\bigl((\theta_1-\theta_2)/2\bigr).
$$
Thus (2') follows from (2) and similarly (3') follows from (3).
\end{proof}

\medskip 

The following lemma shows that part (4) of Theorem \ref{thm-main0} follows from the other parts. Nevertheless, we will
still include a direct geometrical proof of this in Section~\ref{sec-fd-fans}.

\begin{lemma} 
If the condition of Theorem \ref{thm-main0}(4) holds then the conditions of Theorem \ref{thm-main0}(2) and (3) hold.
\end{lemma}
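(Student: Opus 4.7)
The plan is to show condition (4) implies condition (2); condition (3) then follows by the symmetry between the roles of the two generators. Assume $s_1,s_2 \neq 0$ and write $\phi = \theta_1-\theta_2 \in [-\pi/2,\pi/2]$. Introduce the ratio
$$
u = \frac{s_2}{|s_2^2+it_2|^{1/2}} \in (0,1],
$$
noting $u=1$ iff $t_2=0$, since $|s_2^2+it_2|^2 = s_2^4+t_2^2 \ge s_2^4$. Then $s_2 = u\,|s_2^2+it_2|^{1/2}$ and $s_2^3/|s_2^2+it_2|^{3/2} = u^3$, so condition (2) is equivalent to
$$
s_1\,|s_2^2+it_2|^{1/2} \ge 2u^3\cos\phi + 2.
$$
Assumption (4) gives $s_1\,|s_2^2+it_2|^{1/2} = s_1s_2/u \ge 4\cos^3(\phi/3)/u$, so it suffices to establish the purely scalar inequality
$$
2\cos^3(\phi/3) \ge u^4\cos\phi + u \quad \text{for all } u\in(0,1],\ \phi\in[-\pi/2,\pi/2].
$$

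Since $\phi\in[-\pi/2,\pi/2]$ forces $\cos\phi\ge0$, the function $g(u) = u^4\cos\phi + u$ has derivative $4u^3\cos\phi+1>0$ on $[0,1]$, so $g$ is maximised on this interval at $u=1$ with $g(1)=\cos\phi+1$. Hence it is enough to prove $2\cos^3(\phi/3)\ge \cos\phi+1$. Applying the triple angle formula $\cos\phi = 4\cos^3(\phi/3)-3\cos(\phi/3)$ and setting $c=\cos(\phi/3)$, this rearranges to
$$
2c^3 - 3c + 1 \le 0.
$$
Factoring (using $c=1$ as an obvious root) gives $2c^3-3c+1 = (c-1)(2c^2+2c-1)$. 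For $\phi\in[-\pi/2,\pi/2]$ we have $\phi/3\in[-\pi/6,\pi/6]$ and hence $c\in[\sqrt{3}/2,\,1]$. On this range $c-1\le 0$ while $2c^2+2c-1\ge 2\cdot\tfrac{3}{4}+\sqrt{3}-1 = \tfrac12+\sqrt{3}>0$, so the product is $\le 0$.

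The main work is really just algebraic bookkeeping: recognising the right substitution $u=s_2/|s_2^2+it_2|^{1/2}$ that turns the apparently messier inequality (2) into a one-variable inequality, and then invoking the triple angle identity to match it with the cube in (4). There is no serious obstacle; the only point worth watching is that the hypothesis $\phi\in[-\pi/2,\pi/2]$ (guaranteed by replacing a generator by its inverse, as in the statement of Theorem~\ref{thm-main0}) is essential both to ensure $\cos\phi\ge0$ in the monotonicity step and to keep $c=\cos(\phi/3)$ in the range where the quadratic factor $2c^2+2c-1$ is positive. The derivation of (3) from (4) is identical with the indices $1$ and $2$ interchanged.
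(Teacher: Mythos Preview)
Your proof is correct and follows essentially the same route as the paper. Both arguments reduce to the trigonometric inequality $2\cos^3(\phi/3)\ge \cos\phi+1$ on $[-\pi/2,\pi/2]$, prove it via the triple-angle formula and the identical factorisation $(c-1)(2c^2+2c-1)$ with $c=\cos(\phi/3)$, and use the observation that $s_2/|s_2^2+it_2|^{1/2}\le 1$ to absorb the remaining parameters; the paper applies this last bound in two separate places in a chain of inequalities (also routing through the form $4\cos^2(\phi/2)$ of Theorem~\ref{thm-main1}(2')), whereas you package it as the single substitution $u$ and a monotonicity step, but this is only a cosmetic difference.
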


\begin{proof}
First we claim that if $-\pi/2\le \theta_1-\theta_2 \le \pi/2$ then
$$
 4\cos^3\bigl((\theta_1-\theta_2)/3\bigr) \ge 4\cos^2\bigl((\theta_1-\theta_2)/2\bigr)
$$ 
with equality if and only if $\theta_1=\theta_2$. 
To see this, we define $\phi=(\theta_1-\theta_2)/3\in[-\pi/6,\pi/6]$, and write the right hand side in terms of $\phi$.
$$
4\cos^2(3\phi/2) = 2\cos(3\phi)+2 = 8\cos^3(\phi)-6\cos(\phi)+2.
$$
Therefore
\begin{eqnarray*}
4\cos^3\bigl(\phi\bigr) - 4\cos^2\bigl(3\phi/2\bigr) 
& = & -4\cos^3(\phi)+6\cos(\phi)-2 \\
& = & 2\bigl(1-\cos(\phi)\bigr)\bigl(2\cos^2(\phi)+2\cos(\phi)-1\bigr).
\end{eqnarray*}
The quadratic term is positive when $\cos(\phi)\ge \sqrt{3}/2$ and so for such values of $\phi$, this expression is non-negative
with equality if and only if $\phi=0$, which proves the claim.

Therefore, if Theorem \ref{thm-main0}(4) holds, we have
\begin{eqnarray*}
s_1\,|s_2^2+it_2|^{1/2} & \ge & s_1s_2 \\
& \ge & 4\cos^3\bigl((\theta_1-\theta_2)/3\bigr) \\
& \ge & 4\cos^2\bigl((\theta_1-\theta_2)/2\bigr) \\
& = & 2\cos(\theta_1-\theta_2)+2 \\
& \ge & \frac{2s_2^3}{|s_2^2+it_2|^{3/2}}\cos(\theta_1+\theta_2)+2.
\end{eqnarray*}
Hence Theorem \ref{thm-main0}(2) holds. A similar argument shows that if Theorem \ref{thm-main0}(4) then
Theorem \ref{thm-main0}(3) holds too. 
\end{proof}

\medskip

In order to prove Theorem~\ref{thm-main0} (1) we will use Lemma~\ref{lem-diam-Rcircle} below, which gives the maximum 
Cygan distance between a point on a finite ${\mathbb R}$-circle and any other point on the same ${\mathbb R}$-circle. 
We believe this will be of independent interest.

An ${\mathbb R}$-circle $R$ is the boundary of a totally geodesic Lagrangian subspace of ${\bf H}^2_{\mathbb C}$
and is the fixed point set of an anti-holomorphic involution $\iota_R$ in the isometry group of ${\bf H}^2_{\mathbb C}$. 
A ${\mathbb C}$-circle $C$ is the boundary of a totally geodesic complex line of ${\bf H}^2_{\mathbb C}$.
Thinking of $\partial{\bf H}^2_{\mathbb C}$ as the one point compactification of the Heisenberg group, an ${\mathbb R}$-circle $R$
is called finite if it does not contain the point $\infty$. Finite ${\mathbb R}$-circles are non-planar space curves with interesting
geometric properties; see Goldman \cite{gol}. In particular, each finite ${\mathbb R}$-circle $R$ is a meridian of a Cygan sphere. 
This Cygan sphere is preserved as a set by $\iota_R$ and its centre is $\iota_R(\infty)$. In particular, every point on $R$ is
the same Cygan  distance from $\iota_R(\infty)$ and we call this distance $r$ its radius. Given a point $p$ of $R$ we want to 
find a point $q$ in $R$ that maximises the Cygan distance from $p$ among all points of $R$. We call this distance the diameter 
$d$ of $R$ at the point $p$. It is clear from the triangle inequality $d\le 2r$.
However, the Cygan metric is not a geodesic metric, and so for most points $p$ the diameter $d$ is strictly less than $2r$. 
In order to write points on $R$ in an invariant way, we use the Cartan angular invariant ${\mathbb A}$.
The lemma below gives a precise formula for the diameter of $R$ at $p$. 

\begin{lemma}\label{lem-diam-Rcircle}
Let $R$ be a finite ${\mathbb R}$-circle fixed by the anti-holomorphic involution $\iota_R$. Let $r$ be the Cygan distance
from $\iota_R(\infty)$ to any point of $R$. For $\alpha\in[0,\pi/2]$, let $p_\alpha$ be a point on $R$
with ${\mathbb A}(p_\alpha,\iota_R(\infty),\infty)=2\alpha-\pi/2$. 
Then the maximum Cygan distance from $p_\alpha$ to any other
point of $R$ is given by
$$
d_\alpha(R)=2^{1/2}r\bigl(\cos^{2/3}(\alpha)+\sin^{2/3}(\alpha)\bigr)^{3/4}.
$$
\end{lemma}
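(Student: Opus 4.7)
The strategy is to normalise the situation using the Heisenberg translations and rotations that preserve Cygan distances, parameterise $R$ explicitly, reduce the distance computation to a clean algebraic form, and then optimise via one application of H\"older's inequality. Applying a Heisenberg translation I may assume $\iota_R(\infty)=o$, and a further Heisenberg rotation puts $R$ in the standard form
\[
p_\beta^\epsilon=\bigl(\epsilon\,r\,e^{i\beta}\sqrt{\sin(2\beta)},\;r^2\cos(2\beta)\bigr),\quad \beta\in[0,\pi/2],\ \epsilon\in\{+1,-1\},
\]
the two branches meeting at the poles $(0,\pm r^2)$. This curve is the fixed locus of the anti-holomorphic involution swapping $o$ and $\infty$ (verified by writing the involution out in Heisenberg coordinates as $(z,t)\mapsto(-e^{-i\phi}\bar z/(|z|^2+it),\,t/(|z|^4+t^2))$ and computing its fixed set). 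A direct check gives $\mathbb{A}(p_\beta^\epsilon,o,\infty)=\arg(|z|^2-it)=2\beta-\pi/2$, so by the hypothesis on $p_\alpha$ I may take $p_\alpha=p_\alpha^+$.

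The key computation is the Cygan distance from $p_\alpha^+$ to a general $p_\beta^\epsilon\in R$. Using the identities $\sin(2\alpha)+\sin(2\beta)=2\sin(\alpha+\beta)\cos(\alpha-\beta)$ and $\cos(2\alpha)-\cos(2\beta)=-2\sin(\alpha+\beta)\sin(\alpha-\beta)$, both ingredients of the Cygan formula factor through the quantity $M_\epsilon=\sin(\alpha+\beta)-\epsilon\sqrt{\sin(2\alpha)\sin(2\beta)}$:
\[
|z_\alpha-z_\beta^\epsilon|^2=2r^2\cos(\alpha-\beta)\,M_\epsilon,\quad t_\alpha-t_\beta+2\,\mathrm{Im}(z_\alpha\overline{z_\beta^\epsilon})=-2r^2\sin(\alpha-\beta)\,M_\epsilon.
\]
The Pythagorean sum inside $d(p_\alpha,p_\beta^\epsilon)^4$ then collapses via $\cos^2+\sin^2=1$ to $4r^4M_\epsilon^2$. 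By the AM--GM inequality $\sqrt{\sin(2\alpha)\sin(2\beta)}=2\sqrt{(\sin\alpha\cos\beta)(\cos\alpha\sin\beta)}\le\sin(\alpha+\beta)$, so both values of $M_\epsilon$ are non-negative and the maximum over $\epsilon$ is attained at $\epsilon=-1$. The decisive observation is then the algebraic identity
\[
M_{-1}=\sin(\alpha+\beta)+\sqrt{\sin(2\alpha)\sin(2\beta)}=\bigl(\sqrt{\sin\alpha\cos\beta}+\sqrt{\cos\alpha\sin\beta}\bigr)^2,
\]
yielding $d(p_\alpha,p_\beta^-)=\sqrt{2}\,r\bigl(\sqrt{\sin\alpha}\sqrt{\cos\beta}+\sqrt{\cos\alpha}\sqrt{\sin\beta}\bigr)$.

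It only remains to maximise over $\beta\in[0,\pi/2]$. Setting $x=\sqrt{\cos\beta}$ and $y=\sqrt{\sin\beta}$ gives the constraint $x^4+y^4=1$, and H\"older's inequality with conjugate exponents $4/3$ and $4$ provides the sharp bound
\[
\sqrt{\sin\alpha}\,x+\sqrt{\cos\alpha}\,y\le\bigl(\sin^{2/3}\alpha+\cos^{2/3}\alpha\bigr)^{3/4}(x^4+y^4)^{1/4}=\bigl(\cos^{2/3}\alpha+\sin^{2/3}\alpha\bigr)^{3/4},
\]
with equality when $x^3:y^3=\sqrt{\sin\alpha}:\sqrt{\cos\alpha}$, which is attained by some $\beta\in[0,\pi/2]$. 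Combining gives $d_\alpha(R)=\sqrt{2}\,r(\cos^{2/3}\alpha+\sin^{2/3}\alpha)^{3/4}$, as claimed. The main obstacle I anticipate is producing the Pythagorean cancellation and spotting the factorisation of $M_{-1}$ as a perfect square; without these the optimisation looks formidable, whereas with them the exponents $2/3$ appear naturally from the H\"older pairing and the formula falls out.
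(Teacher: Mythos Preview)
Your proof is correct and follows essentially the same route as the paper: normalise so that $\iota_R(\infty)=o$, use the standard parametrisation of $R$, compute the Cygan distance between two points on $R$ and reduce it to $2^{1/2}r\bigl(\sqrt{\sin\alpha}\sqrt{\cos\beta}+\sqrt{\cos\alpha}\sqrt{\sin\beta}\bigr)$, then maximise over $\beta$. The only substantive difference is the final optimisation: the paper differentiates $f_\alpha(\beta)=\sqrt{\sin\alpha}\sqrt{\cos\beta}+\sqrt{\cos\alpha}\sqrt{\sin\beta}$ and solves for the critical point, whereas you apply H\"older's inequality with exponents $(4/3,4)$ to the pair $(x,y)=(\sqrt{\cos\beta},\sqrt{\sin\beta})$ on the constraint $x^4+y^4=1$, which yields the bound $(\cos^{2/3}\alpha+\sin^{2/3}\alpha)^{3/4}$ in one line and makes the appearance of the exponent $2/3$ transparent rather than a byproduct of solving $f_\alpha'=0$.
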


Observe that $d_\alpha(R)\le 2r$, which is attained if and only if $\alpha=\pi/4$, that is whenever 
$(p_\alpha, \iota_R(\infty), \infty \big)$ lie on a common $\R$-circle. Also, $d_\alpha(R)\ge \sqrt{2}r$, which is
attained if and only if $\alpha=0$ or $\alpha=\pi/2$, that is whenever $(p_\alpha, \iota_R(\infty), \infty \big)$ 
lie on a common $\C$-circle.

\section{Background}

All material in this section is standard; see \cite{gol} for example unless otherwise indicated. 
	
\subsection{Complex Hyperbolic space}

Let $\C^{2,1}$ be the $3$-dimensional vector space over $\C$ equipped with the Hermitian form of signature $(2,1)$ given by 
$$
\langle\z,\w\rangle=\w^{\ast}H\z=\bar w_3 z_1+\bar w_2 z_2+\bar w_1 z_3,
$$
where $\z,\,\w$ are column vectors in $\C^3$ and the matrix of the Hermitian form is given by
\begin{center}
	$H =\left[ \begin{array}{cccc}
	0 & 0 & 1\\
	0 & 1 & 0 \\
	1 & 0 & 0\\
	\end{array}\right].$
\end{center}
If $\z \in \C^{2,1}$ then $\langle\z,\z\rangle$ is real. Thus we may consider the following subsets of $\C^{2,1}\setminus \{\bf 0\}:$
\begin{eqnarray*}
V_+ & = & \{\z\in\C^{2,1}:\langle\z,\z \rangle>0\}, \\
V_{-} & = & \{\z\in\C^{2,1}:\langle\z,\z \rangle<0\},\\
V_{0} & = & \{\z \in\C^{2,1}\setminus \{{\bf 0}\}:\langle\z,\z \rangle=0\}.
\end{eqnarray*}
A vector $\z$ in $\C^{2,1}$ is called positive, negative or null depending on whether $\z$ belongs to $V_+$, $V_-$ or  $V_0$ 
respectively. Let $\P:\C^{2,1}\setminus\{{\bf 0}\}\longrightarrow \C {\P}^{2}$ be the projection map onto the complex projective 
space. The complex hyperbolic space is defined to be $\c^2=\P (V_{-})$. The ideal boundary of complex hyperbolic space is 
$\partial \c^2=\P (V_{0})$.	 Let ${\rm U}(2,1)$ be the unitary group of above Hermitian form. The biholomorphic isometry group of 
$\c^2$ is the projective unitary group ${\rm PU}(2,1)$. In addition, the map $\z\longmapsto \bar\z$ that sends each entry of 
$\z$ to its complex conjugate $\bar \z$ is an anti-holomorphic isometry of $\c^2$. 
Any other anti-holomorphic isometry may be written as the projectivisation of the 
composition of this map and an element of ${\rm U}(2,1)$. All complex hyperbolic isometries are either holomorphic or 
anti-holomorphic.

We define Siegel domain model of complex hyperbolic space by taking the section defined by $z_3=1$ for the given Hermitian 
form. In other words, if $(z_1,z_2)$ is in $\C^2$, we define its standard lift to be $\z=(z_1,z_2,1)^t$ in $\C^{2,1}$. The Siegel
domain is the subset of $\C^2$ consisting of points whose standard lift lies in $V_-$. Specifically, the Siegel domain is:
$$
\c^2 =\{(z_1,z_2)\in \C^2 : \ 2\Re(z_1)+|z_2|^2<0\}.
$$ 
Now consider ${\bf q}_\infty=(1,0,0)^t$ be the column vector in $\C^{2,1}$. It is easy to see that 
${\bf q}_\infty\in V_0$. We define $\P({\bf q}_\infty)=\infty$, which lies in $\partial \c^2$. Any point in $\partial\c^2-\{\infty\}$ 
has a standard lift in $V_0$. That is,
$$
\partial \c^2-\{\infty\}=\P(V_{0}-\{{\bf q}_\infty\})=\{(z_1,z_2)\in \C^2 :2\Re(z_1)+|z_2|^2=0\}.
$$
The origin is the point $o\in \partial{\bf H}^2_{\mathbb C}={\mathbb P}(V_0)$ with $(z_1,z_2)=(0,0)$. 

Suppose $z_1,\,z_2,\,z_3$ are distinct points in $\partial{\bf H}^2_{\mathbb C}$ with standard lifts 
${\bf z}_1$, ${\bf z}_2$, ${\bf z}_3$ respectively, we define their Cartan angular invariant ${\mathbb A}(z_1,z_2,z_3)$ to be
$$
{\mathbb A}(z_1,z_2,z_3)
=\arg\bigl(-\langle{\bf z}_1,{\bf z}_2\rangle\langle{\bf z}_2,{\bf z}_3\rangle\langle{\bf z}_3,{\bf z}_1\rangle\bigr).
$$
We know that ${\mathbb A}(z_1,z_2,z_3)=\pm \frac{\pi}{2}$ (resp. ${\mathbb A}(z_1,z_2,z_3)=0$) if and only if $z_1,z_2,z_3$ lie on the same $\C$-circle (resp. $\R$-circle). 

\subsection{The Heisenberg group}

The set $\partial \c^2-\{\infty\}$ naturally carries the structure of the Heisenberg group ${\mathfrak N}$. 
Thus, the boundary of complex hyperbolic space is the one-point compactification of the Heisenberg group, 
which should be thought of as a generalisation of the well known fact that the boundary of the upper half space 
model of  $\r^3$ is the one point compactification of $\C$.

We recall that the Heisenberg group ${\mathfrak N}$ is $\C\times\R$ with the group law
$$
(\zeta_1,v_1)\cdot(\zeta_2,v_2)=\bigl(\zeta_1+\zeta_2,v_1+v_2+2\Im(\zeta_1\bar\zeta_2)\bigr).
$$
The identity element in the Heisenberg group is $(0,0)$, which we denote by $o$.
The map $\Pi_V:{\mathfrak N}\longrightarrow {\mathbb C}$ given by $\Pi_V:(\zeta,v)\longmapsto \zeta$ is called vertical
projection. It is a homomorphism.

Given $z=(\zeta,v)$ in the Heisenberg group, we define its standard lift to be
$$ 
{\bf z} = \begin{pmatrix} -|\zeta|^2 + iv \\ \sqrt{2}\zeta \\ 1 \end{pmatrix}\in V_0.
$$
Given $(\tau,t)$ in the Heisenberg group, there is a unique, upper triangular unipotent element of ${\rm U}(2,1)$ taking
the standard lift of $o=(0,0)$ to the standard lift of $(\tau,t)$, which is given by
$$
T_{(\tau,t)}=\begin{pmatrix} 1 & -\sqrt{2} \bar{\tau} & -|\tau|^2+it \\
0 & 1 & \sqrt{2}\tau \\
0 & 0 & 1 \end{pmatrix}.
$$
The map $(\tau,t)$ is a group homomorphism from ${\mathfrak N}$ to ${\rm U}(2,1)$. Thus, applying 
$T_{(\tau,t)}$ to the standard lift of points in ${\mathfrak N}$ is equivalent to ${\mathfrak N}$ acting on itself
by left translation; that is, the map
$(\zeta,v)\longmapsto (\tau,t)\cdot(\zeta,v)=\bigl(\tau+\zeta,t+v+2\Im(\tau\bar\zeta)\bigr)$.

We define Cygan metric on the Heisenberg group by 
$$
\rho_0\big((\zeta_1,v_1), (\zeta_2,v_2)\big)= \big||\zeta_1-\zeta_2|^2-iv_1+iv_2-2i\Im(\zeta_1\bar\zeta_2)  \big|^{\frac{1}{2}}.
$$
There is an easy way to compute the Cygan distance. If $z_1,\,z_2$ are two points in ${\mathfrak N}$ with standard lifts
${\bf z}_1,\,{\bf z}_2\in V_0$ respectively then
$$
\rho_0(z_1,z_2)=\bigl|\langle {\bf z}_1,{\bf z}_2\rangle\bigr|^{1/2}.
$$
We can define Cygan sphere $S_{(r,z_0)}$ of radius $r$ and centre $z_0 \in \partial \c^2$ by 
$$
S_{(r,z_0)}= \Bigl\{z \in \partial \c^2: \rho_0(z,z_0)=r\Bigr\}.$$

\subsection{Special subsets of complex hyperbolic space and its boundary}

There are two types of totally geodesic subspaces of $\c^2$ with real dimension 2. The first is the intersection with $\c^2$ of 
a complex line (copy of ${\mathbb C}{\mathbb P}^1$ inside ${\mathbb C}{\mathbb P}^2$). These are fixed by involutions that 
are holomorphic isometries of $\c^2$. Their intersection with $\partial\c^2$ are called chains or $\C$-circles. 
The second type of totally geodesic subspace is the intersection of $\c^2$ with a Lagrangian planes (copies of 
${\mathbb R}{\mathbb P}^2$ inside ${\mathbb C}{\mathbb P}^2$). These are fixed by involutions that are 
anti-holomorphic isometries of $\c^2$. Their intersection
with $\partial\c^2$ are called ${\mathbb R}$-circles. A ${\mathbb C}$-circle or an ${\mathbb R}$-circle is 
called infinite if it passes through $\infty$ and is called finite otherwise.

There are no totally geodesic real hypersurfaces in $\c^2$ and so it is necessary to make a choice for the hypersurfaces 
containing the sides of a fundamental domain. We make two choices in this paper. The first are Cygan spheres, which are
a particular cases of the boundary of bisectors, 
and the second are fans.  Both are foliated by complex lines and Lagrangian planes, and both are mapped to themselves by
the involution fixing each complex line in this foliation and by the involution fixing each Lagrangian plane in the foliation. 
In what follows, we will discuss the boundary of these hypersurfaces in $\partial\c^2={\mathfrak N}\cup\{\infty\}$.

\subsection{Isometric spheres}

In \cite{gol}, Goldman extended the definition of isometric spheres in real hyperbolic geometry to geometry of complex hyperbolic space. These are spheres in the Cygan metric and are a particular type of bisector.

Let $P$ be any element of $\PU(2,1)$ which does not fix $\infty$, then the isometric sphere corresponding to $P$ is given by 
$$
I_P= \Bigl\{z \in \partial \c^2: \bigl|\langle {\bf z}, q_\infty \rangle \bigr|= \bigl|\langle {\bf z}, P^{-1}(q_\infty)\rangle\bigr|\Bigr\}.
$$
Let $P$ be an element of $\PU(2,1)$ then $P^{-1}$ has the following form: 
\begin{equation}\label{eq P}
		P=\left(\begin{matrix} a & b & c \\ d & e &f  \\ g & h & j \end{matrix}\right),
		\qquad
		P^{-1}=\left(\begin{matrix} \bar j & \bar f & \bar c \\ \bar h & \bar e &\bar b  \\ \bar g & \bar d & \bar a \end{matrix}\right),
\end{equation}
Such a map does not fix $\infty$ if and only if $g\neq 0$. The isometric sphere of $P$; denoted by  $I_P$ is the sphere in 
Cygan metric with centre $P^{-1}(\infty)$ and radius $r_P=1/\sqrt{|g|}$. Similarly, the isometric sphere $I_{P^{-1}}$ is
the Cygan sphere with centre $P(\infty)$ and radius $r_{P^{-1}}=r_P=1/\sqrt{|g|}$. In Heisenberg coordinates, the centres of these
spheres are
$$
P^{-1}(\infty)= \left(\frac{\bar h}{\sqrt 2{\bar g}},-\Im(\frac{j}{g}) \right), \qquad
P(\infty)= \left(\frac{d}{\sqrt 2{g}},\Im(\frac{a}{g}) \right).
$$
We will need the following lemma.	

\begin{lemma}(Proposition $2.4$ of \cite{kam1}){\label{Cygan Sphere}}
Let $P$ be any element of $\PU(2,1)$ such that $P(\infty) \neq \infty$, then there exist $r_P$ such that for all $z \in \partial {{\bf H}^2_{\mathbb C}}\setminus\left\{\infty, P^{-1}(\infty)\right\}$ we have 
$$\rho_0\big(P(z),P(\infty)\big)=\frac{r_P^2}{\rho_0\big(z,P^{-1}(\infty)\big)}.
$$
\end{lemma}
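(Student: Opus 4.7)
The plan is to prove the inversion identity by direct computation using the formula $\rho_0(z_1,z_2)=|\langle {\bf z}_1,{\bf z}_2\rangle|^{1/2}$ for standard lifts together with the $\mathrm{U}(2,1)$-invariance of the Hermitian form. Throughout I will use the matrix form of $P$ and $P^{-1}$ in \eqref{eq P}, with $g\neq 0$ since $P(\infty)\neq\infty$, and I will take $r_P=1/\sqrt{|g|}$ as defined in the text.

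First I would set up the standard lifts. Let ${\bf z}=(z_1,z_2,1)^t$ be the standard lift of $z\in\partial{\bf H}^2_{\mathbb C}\setminus\{\infty\}$. Then $P{\bf z}$ is a lift of $P(z)$ whose third coordinate is $w_3:=gz_1+hz_2+j$; note $w_3\neq 0$ precisely because $z\neq P^{-1}(\infty)$, so the standard lift of $P(z)$ is $(1/w_3)P{\bf z}$. Similarly, $P{\bf q}_\infty=(a,d,g)^t$ and $P^{-1}{\bf q}_\infty=(\bar j,\bar h,\bar g)^t$, so the standard lifts of $P(\infty)$ and $P^{-1}(\infty)$ are $(1/g)P{\bf q}_\infty$ and $(1/\bar g)P^{-1}{\bf q}_\infty$ respectively.

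Next I would compute the two relevant Cygan distances using the fact that $\langle\cdot,\cdot\rangle$ is linear in the first slot and conjugate linear in the second, and that $\langle P{\bf z},P{\bf w}\rangle=\langle {\bf z},{\bf w}\rangle$ for $P\in\mathrm{U}(2,1)$. For the first distance,
$$
\bigl\langle (1/w_3)P{\bf z},\,(1/g)P{\bf q}_\infty\bigr\rangle=\frac{1}{w_3\bar g}\langle P{\bf z},P{\bf q}_\infty\rangle=\frac{1}{w_3\bar g}\langle {\bf z},{\bf q}_\infty\rangle=\frac{1}{w_3\bar g},
$$
since $\langle {\bf z},{\bf q}_\infty\rangle={\bf q}_\infty^{*}H{\bf z}=z_3=1$. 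For the second distance,
$$
\bigl\langle {\bf z},\,(1/\bar g)P^{-1}{\bf q}_\infty\bigr\rangle=\frac{1}{g}\langle {\bf z},P^{-1}{\bf q}_\infty\rangle=\frac{1}{g}\langle P{\bf z},{\bf q}_\infty\rangle=\frac{w_3}{g}.
$$

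Finally I take absolute values, square root, and multiply:
$$
\rho_0\bigl(P(z),P(\infty)\bigr)\cdot\rho_0\bigl(z,P^{-1}(\infty)\bigr)=\left|\frac{1}{w_3\bar g}\right|^{1/2}\left|\frac{w_3}{g}\right|^{1/2}=\frac{1}{|g|}=r_P^{\,2}.
$$
Rearranging gives the stated identity. There is no real obstacle here — the only thing to be careful about is the conjugate linearity of the Hermitian form in the second slot when rescaling lifts, and ensuring $w_3\neq 0$ on the stated domain; the unitarity of $P$ makes the $w_3$ factors cancel automatically.
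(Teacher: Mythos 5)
Your computation is correct: the standard lifts, the sign of the conjugate-linearity when rescaling the second slot, the identification of $w_3=gz_1+hz_2+j$ with $\langle P{\bf z},{\bf q}_\infty\rangle=\langle{\bf z},P^{-1}{\bf q}_\infty\rangle$, and the cancellation giving $1/|g|=r_P^2$ all check out, and the hypothesis $z\neq P^{-1}(\infty)$ is used exactly where it is needed to ensure $w_3\neq 0$. Note that the paper does not prove this lemma at all; it is quoted as Proposition~2.4 of Kamiya's paper, so your argument is filling in an omitted proof rather than paralleling one. What you have written is the standard direct verification (essentially Kamiya's own argument for isometric spheres), and it is complete as stated; the only cosmetic point is that one should observe $|g|$, and hence $r_P$, is independent of the choice of unitary lift of $P\in\PU(2,1)$.
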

Note that $P$ maps  $I_P$ to $I_{P^{-1}}$ and maps the component of 	$\overline {{\bf H}^2_{\mathbb C}} \setminus I_P$ containing $\infty$ to the component of $\overline {{\bf H}^2_{\mathbb C}} \setminus I_{P^{-1}}$ not containing $\infty$.

We use an involution $\iota$ swapping $o$ and $\infty$. It is defined as
\begin{equation}\label{eq-iota}
	\iota=\left(\begin{matrix} 0 & 0 & 1 \\ 0 & 1 & 0 \\ 1 & 0 & 0 \end{matrix}\right).
\end{equation}
For $(\zeta,v)\neq (0,0)$  the involution $\iota$ is given in Heisenberg coordinates as
$$
\iota(\zeta,v)=\left(\frac{\zeta}{-|\zeta|^2+iv},\,\frac{-v}{\bigl||\zeta|^2+iv\bigr|^2}\right).
$$
As a consequence of Lemma \ref{Cygan Sphere}, the involution $\iota$ maps the Cygan sphere with centre $o$ and radius $r$ 
to Cygan sphere with centre $o$ and radius $1/r$.
Also, conjugating $A$ by $\iota$ results in a matrix of the same form as $B$ but where the indices of $s_j$, $t_j$, $\theta_j$ 
are all $1$. Likewise, conjugating
$B$ by $\iota$ results in a matrix of the same form as $A$ but where the indices of $s_j$, $t_j$, $\theta_j$ are all 2.
 
We can give geographical coordinates on isometric spheres. For convenience when proving Lemma~\ref{lem-diam-Rcircle} we
modify the more usual coordinates by letting $\alpha$ vary in $[0,\pi/2]$. Specifically, we parametrise points on the Cygan sphere
with centre $o$ and radius $r>0$ by $s_{\alpha,\beta}$ where $(\alpha,\beta)\in[0,\pi/2]\times({\mathbb R}/2\pi{\mathbb Z})$, given by
$$
s_{\alpha,\beta}=\Bigl(r\sqrt{\sin(2\alpha)}e^{i\alpha+i\beta},r^2\cos(2\alpha)\Bigr).
$$
The point $s_{\alpha,\beta}$ has standard lift
$$
{\bf s}_{\alpha.\beta}=\left(\begin{matrix} r^2ie^{2i\alpha} \\ r\sqrt{2\sin(2\alpha)}e^{i\alpha+i\beta} \\ 1 \end{matrix}\right).
$$
Fixing $\alpha=\alpha_0$ gives a ${\mathbb C}$-circle. 
The union of the arcs where $\beta=\beta_0$ and $\beta=\beta_0+\pi$ gives an ${\mathbb R}$-circle.
The involutions fixing these ${\mathbb C}$-circles and ${\mathbb R}$-circles all map the sphere to itself. For each $\beta_0$
this involution preserves the Cygan distance between points of the sphere, but the only value of $\alpha_0$ where this is true
is $\alpha_0=\pi/4$, corresponding to the equator.
 
\subsection{Fans}

Fans are another class of surfaces in the Heisenberg group. They were introduced by Goldman and Parker in \cite{gp2}. 
An infinite fan is a Euclidean plane in ${\mathfrak N}$ whose image under vertical projection $\Pi_V$ is an affine line in 
${\mathbb C}$. Infinite fans are foliated by infinite ${\mathbb C}$-circles and infinite ${\mathbb R}$-circles. 
Given $ke^{i\phi}\in{\mathbb C}$, let $F^{(\infty)}_{ke^{i\phi}}$ be the fan whose image under vertical projection is the line
given by the equation $x\cos(\phi)+y\sin(\phi)=k$, where $z=x+iy$. We can write points of 
$$
F^{(\infty)}_{ke^{i\phi}}=\Bigl\{ f_{a,b}=\bigl((k+ia)e^{i\phi},\,b-2ka\bigr)\ :\ (a,b)\in{\mathbb R}^2\Bigr\}.
$$
The standard lift of $f_{a,b}$ is
$$
{\bf f}_{a,b}\left(\begin{matrix}
-a^2-k^2+ib-2ika \\ \sqrt{2}(k+ia)e^{i\phi} \\ 1 \end{matrix}\right).
$$
We can give this fan coordinates that resemble geographical coordinates. Fixing $a=a_0$ gives an infinite ${\mathbb C}$-circle
and fixing $b=b_0$ gives an infinite ${\mathbb R}$-circle. The involutions that fixing both of these ${\mathbb C}$-circles and the
${\mathbb R}$-circles are all Cygan isometries. 

We are also interested in fans that are the image of this one under the involution $\iota$. That is
$$
F^{(o)}_{ke^{i\phi}}=\left\{ \left(\frac{-(k+ia)(a^2+k^2+ib-2ika)e^{i\phi}}{(a^2+k^2)^2+(b-2ka)^2},\,
\frac{-b+2ka}{(a^2+k^2)^2+(b-2ka)^2}\right)\ :\ (a,b)\in{\mathbb R}^2\right\}.
$$

\subsection{A discreteness criterion}\label{sec-discreteness}

In order to show the group $\langle A,B\rangle$ is discrete and free we will consider its action on 
$\partial{\bf H}^2_{\mathbb C}={\mathfrak N}\cup\{\infty\}$ and we will use the Klein Combination theorem,
Proposition~\ref{KCT}.

The construction is the following. We will consider four topological spheres in ${\mathfrak N}\cup\{\infty\}$ called 
$S_A^+,\,S_A^-,\,S_B^+,\,S_B^-$. The complement of each of these spheres has two (open) components, which we call the interior
and the exterior. We assume that:
\begin{enumerate}
\item the interiors of $S_A^+,\,S_A^-,\,S_B^+,\,S_B^-$ are disjoint;
\item $A$ sends the exterior of $S_A^-$ onto the interior of $S_A^+$, and hence $A^{-1}$ sends the exterior of 
$S_A^+$ onto the interior of $S_A^-$;
\item $B$ sends the exterior of $S_B^-$ onto the interior of $S_B^+$, and hence $B^{-1}$ sends the exterior of 
$S_B^+$ onto the interior of $S_B^-$.
\end{enumerate}
Then the intersection of the exteriors, which we call $D$, is then a fundamental domain for $\langle A,B\rangle$. It is easy to see 
that if $W$ is a reduced word in $A^{\pm 1}$ and $B^{\pm 1}$ (that is all consecutive occurrences of $A^{\pm 1}A^{\mp 1}$
and $B^{\pm 1}B^{\mp 1}$ have been cancelled) then $W$ sends $D$ into the interiors of one of  $S_A^+,\,S_A^-,\,S_B^+,\,S_B^-$ 
corresponding to the last generator to be applied. In our constructions, the spheres will either be the Cygan spheres or 
they will be fans. 

Many different versions of this result have been used for complex hyperbolic isometries; see, for example, 
Proposition~6.3 of Parker \cite{par2} or the notion of a group being compressing, due to Wyss-Gallifent in \cite{wg} and used 
by Monaghan, Parker and Pratoussevitch in \cite{mpa}. 

We can consider how our construction relates to that of Parker and Will \cite{PW}. Suppose that the interiors of 
$S_A^+,\,S_A^-,\,S_B^+,\,S_B^-$ are disjoint, but that there are points $q_+=S_A^+\cap S_B^-$ and $q_-=S_A^-\cap S_B^+$.
The existence of such points implies that we have equality in the relevant expression of Theorem~\ref{thm-main0}.
If furthermore $B(q_+)=q_-$ and $A(q_-)=q_+$ then $q_+$ is a fixed point of $AB$ and $q_-$ is a fixed point of $BA$. The
existence of points with these properties in necessary of $AB$ is parabolic. 

\section{The Cygan diameter of a finite ${\mathbb R}$-circle}

In this section we prove Lemma~\ref{lem-diam-Rcircle}. Let $R$ be any finite ${\mathbb R}$-circle and let $\iota_R$ be the 
anti-holomorphic involution fixing $R$. Let $r$ be the radius of $R$, that is $r$ is the Cygan distance from 
$\iota_R(\infty)$ to any point of $R$. 
Applying a Cygan isometry (Heisenberg translation and rotation) if necessary, we may 
assume that $R$ has the following form
$$
R=\Bigl\{p_{\alpha,\epsilon}=\bigl(\epsilon r\sqrt{\sin(2\alpha)}e^{i\alpha},r^2\cos(2\alpha)\bigr)\ :\ 
\alpha\in[0,\pi/2],\ \epsilon=\pm1\Bigr\}.
$$
Perhaps the easiest way to see that this collection of points comprise an ${\mathbb R}$-circle is to consider the
following map $\iota_R$:
$$
\iota_R:\left(\begin{matrix} z_1 \\ z_2 \\ z_3 \end{matrix}\right) \longmapsto
\left(\begin{matrix} \bar{z}_3r^2 \\ -i\bar{z}_2 \\ \bar{z}_1/r^2 \end{matrix}\right).
$$
It is easy to check that $\langle \iota_R{\bf z},\iota_R{\bf w}\rangle=\overline{\langle{\bf z},{\bf w}\rangle}$ for any 
${\bf z}$, ${\bf w}$ in $\C^{2,1}$, and so $\iota_R$ is a complex hyperbolic isometry. Moreover, it is easy to
check that $\iota_R^2$ is the identity. Hence, by construction, the subset of $V_0$ projectively fixed by $\iota_R$
is an ${\mathbb R}$-circle. For $\alpha\in[0,\pi/2]$ and $\epsilon=\pm1$ consider
$$
{\bf p}_{\alpha.\epsilon}=\left(\begin{matrix} r^2ie^{2i\alpha} \\ \epsilon r\sqrt{2\sin(2\alpha)}e^{i\alpha} \\ 1 \end{matrix}\right)
\in V_0.
$$
Observe that $\iota_R:{\bf p}_{\alpha,\epsilon}\longmapsto (-ie^{-2i\alpha}){\bf p}_{\alpha,\epsilon}$ and so 
${\bf p}_{\alpha,\epsilon}$ is projectively fixed by $\iota_R$. We can express 
$p_{\alpha,\epsilon}={\mathbb P}{\bf p}_{\alpha,\epsilon}$ in Heisenberg coordinates as:
$$
p_{\alpha,\epsilon}=\Bigl(\epsilon r\sqrt{\sin(2\alpha)}e^{i\alpha},r^2\cos(2\alpha)\Bigr).
$$
This gives the result.

It is easy to check that ${\mathbb A}(p_{\alpha,\epsilon},o,\infty)=2\alpha-\pi/2$. Thus, there are two points $p_\alpha$ satisfying 
the condition ${\mathbb A}(p_{\alpha},\iota_R(\infty), \infty)=2\alpha-\pi/2$, namely $p_{\alpha,+1}$ and $p_{\alpha,-1}$. 
Given $\theta\in[0,\pi/2]$ and
$\eta=\pm 1$ the Cygan distance from $p_{\alpha,\epsilon}$ to $p_{\theta,\eta}$ is given by
\begin{eqnarray*}
\rho_0({p_{\alpha,\epsilon}, p_{\theta,\eta}})
& = & \Bigl| ir^2e^{2i\theta}-ir^2e^{-2i\alpha}
+2\eta\epsilon r^2\sqrt{\sin(2\alpha)\sin(2\theta)}e^{i\theta-i\alpha}\Bigr|^{1/2} \\
& = & \Bigl| -2r^2\sin(\theta+\alpha)e^{i\theta-i\alpha}
+2\eta\epsilon r^2\sqrt{\sin(2\alpha)\sin(2\theta)}e^{i\theta-i\alpha}\Bigr|^{1/2} \\
& = & 2^{1/2}r\Bigl( \sin(\alpha+\theta)-\eta\epsilon\sqrt{\sin(2\alpha)\sin(2\theta)}\Bigr)^{1/2} \\
& = & 2^{1/2}r\Bigl( \sin(\alpha)\cos(\theta)+\cos(\alpha)\sin(\theta)-\eta\epsilon
2\sqrt{\sin(\alpha)\cos(\alpha)\sin(\theta)\cos(\theta)}\Bigr)^{1/2} \\
& = & 2^{1/2}r\bigl|\sin^{1/2}(\alpha)\cos^{1/2}(\theta)-\eta\epsilon \cos^{1/2}(\alpha)\sin^{1/2}(\theta)\bigr|.
\end{eqnarray*}
We need to maximize this quantity. Taking positive square roots of all the trigonometric functions, we see that
this maximum arises when $\eta=-\epsilon$. We now use calculus to find the resulting maximum. Putting all
this together, gives a proof of Lemma~\ref{lem-diam-Rcircle}. 

\begin{lemma}\label{lem-trig2}
Given $\alpha\in[0,\pi/2]$ define $f_\alpha:[0,\pi/2]\longrightarrow {\mathbb R}$ by
$$
f_\alpha(\theta)=\sin^{1/2}(\alpha)\cos^{1/2}(\theta)+\cos^{1/2}(\alpha)\sin^{1/2}(\theta).
$$
Then 
$$
f_\alpha(\theta)\le \bigl(\sin^{2/3}(\alpha)+\cos^{2/3}(\alpha)\bigr)^{3/4}.
$$
\end{lemma}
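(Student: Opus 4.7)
The plan is to show that the unique critical point of $f_\alpha$ in the open interval $(0,\pi/2)$ is a maximum, evaluate $f_\alpha$ there in closed form, and then check that this interior value dominates the boundary values at $\theta=0$ and $\theta=\pi/2$.

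First, I would handle the degenerate cases $\alpha\in\{0,\pi/2\}$ separately: when $\alpha=0$ the function reduces to $f_0(\theta)=\sin^{1/2}(\theta)$ with maximum $1$, matching the right-hand side $(0+1)^{3/4}=1$, and similarly for $\alpha=\pi/2$. So I may assume $\alpha\in(0,\pi/2)$ and look for interior critical points. Differentiating gives
\begin{equation*}
f_\alpha'(\theta)=\tfrac{1}{2}\cos^{1/2}(\alpha)\,\frac{\cos(\theta)}{\sin^{1/2}(\theta)}-\tfrac{1}{2}\sin^{1/2}(\alpha)\,\frac{\sin(\theta)}{\cos^{1/2}(\theta)},
\end{equation*}
and setting $f_\alpha'(\theta)=0$ and clearing denominators yields $\cos^{1/2}(\alpha)\cos^{3/2}(\theta)=\sin^{1/2}(\alpha)\sin^{3/2}(\theta)$, which simplifies to the single relation
\begin{equation*}
\tan^{3}(\theta)=\cot(\alpha).
\end{equation*}
This has exactly one solution $\theta_\ast\in(0,\pi/2)$, and since $f_\alpha'(\theta)\to+\infty$ as $\theta\to0^+$ and $f_\alpha'(\theta)\to-\infty$ as $\theta\to\pi/2^-$, this critical point is a maximum.

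Next I would evaluate $f_\alpha(\theta_\ast)$ explicitly. From $\tan^3(\theta_\ast)=\cos(\alpha)/\sin(\alpha)$ I read off
\begin{equation*}
\sin(\theta_\ast)=\frac{\cos^{1/3}(\alpha)}{\bigl(\sin^{2/3}(\alpha)+\cos^{2/3}(\alpha)\bigr)^{1/2}},\qquad
\cos(\theta_\ast)=\frac{\sin^{1/3}(\alpha)}{\bigl(\sin^{2/3}(\alpha)+\cos^{2/3}(\alpha)\bigr)^{1/2}}.
\end{equation*}
Substituting into $f_\alpha$, both terms collect with the common denominator $(\sin^{2/3}(\alpha)+\cos^{2/3}(\alpha))^{1/4}$:
\begin{equation*}
f_\alpha(\theta_\ast)=\frac{\sin^{1/2}(\alpha)\sin^{1/6}(\alpha)+\cos^{1/2}(\alpha)\cos^{1/6}(\alpha)}{(\sin^{2/3}(\alpha)+\cos^{2/3}(\alpha))^{1/4}}=\bigl(\sin^{2/3}(\alpha)+\cos^{2/3}(\alpha)\bigr)^{3/4}.
\end{equation*}

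Finally, I would check the boundary: $f_\alpha(0)=\sin^{1/2}(\alpha)$ and $f_\alpha(\pi/2)=\cos^{1/2}(\alpha)$, both of which are bounded above by $(\sin^{2/3}(\alpha)+\cos^{2/3}(\alpha))^{3/4}$ since each summand inside the parentheses is already at least $\sin^{4/3}(\alpha)$ or $\cos^{4/3}(\alpha)$ respectively after raising the desired inequality to the $4/3$-power. Combining the interior maximum with the boundary estimates proves the lemma. The only step that needs a little care is the algebraic simplification at $\theta_\ast$, but the matching exponents $\sin^{1/2}\cdot\sin^{1/6}=\sin^{2/3}$ make it collapse cleanly; I do not anticipate a real obstacle.
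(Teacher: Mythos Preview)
Your proof is correct and follows essentially the same approach as the paper: differentiate $f_\alpha$, locate the unique interior critical point via the relation $\sin^{3/2}(\theta_0)\sin^{1/2}(\alpha)=\cos^{3/2}(\theta_0)\cos^{1/2}(\alpha)$ (equivalently your $\tan^3(\theta)=\cot(\alpha)$), solve explicitly for $\sin(\theta_0)$ and $\cos(\theta_0)$, substitute, and compare with the boundary values. You add a little extra rigor by separately treating $\alpha\in\{0,\pi/2\}$ and by arguing that the critical point is indeed a maximum, but the core computation is identical to the paper's.
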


\begin{proof}
First observe
\begin{eqnarray*}
f_\alpha(0) & = & \sin^{1/2}(\alpha)\le \bigl(\sin^{2/3}(\alpha)+\cos^{2/3}(\alpha)\bigr)^{3/4}, \\
f_\alpha(\pi/2) & = & \cos^{1/2}(\alpha)\le \bigl(\sin^{2/3}(\alpha)+\cos^{2/3}(\alpha)\bigr)^{3/4}.
\end{eqnarray*}
In the first case, there is equality if and only if $\alpha=\pi/2$ and in the second case if and only if
$\alpha=0$.

Now suppose $\theta\in(0,\pi/2)$. Differentiating with respect to $\theta$ we have
$$
f'_\alpha(\theta)=\frac{-\sin(\theta)\sin^{1/2}(\alpha)}{2\cos^{1/2}(\theta)}
+\frac{\cos(\theta)\cos^{1/2}(\alpha)}{2\sin^{1/2}(\theta)}.
$$
Therefore, if $\theta_0$ is a value of $\theta$ for which $f_\alpha'(\theta_0)=0$ we have
$$
\sin^{3/2}(\theta_0)\sin^{1/2}(\alpha)=\cos^{3/2}(\theta_0)\cos^{1/2}(\alpha).
$$
In other words, $\cos^{1/2}(\theta_0)=k\sin^{1/6}(\alpha)$ and $\sin^{1/2}(\theta_0)=k\cos^{1/6}(\alpha)$ for
some constant $k$, depending on $\alpha$. Using $1=\cos^2(\theta_0)+\sin^2(\theta_0)$ we have:
$$
1=k^4\bigl(\sin^{2/3}(\alpha)+\cos^{2/3}(\alpha)\bigr).
$$
Hence:
\begin{eqnarray*}
\cos^{1/2}(\theta_0) & = & \frac{\sin^{1/6}(\alpha)}{\bigl(\sin^{2/3}(\alpha)+\cos^{2/3}(\alpha)\bigr)^{1/4}}, \\
\sin^{1/2}(\theta_0) & = & \frac{\cos^{1/6}(\alpha)}{\bigl(\sin^{2/3}(\alpha)+\cos^{2/3}(\alpha)\bigr)^{1/4}}.
\end{eqnarray*}
Therefore
\begin{eqnarray*}
f_\alpha(\theta_0) & = & \sin^{1/2}(\alpha)\cos^{1/2}(\theta_0)+\cos^{1/2}(\alpha)\sin^{1/2}(\theta) \\
& = &  \frac{\sin^{2/3}(\alpha)}{\bigl(\sin^{2/3}(\alpha)+\cos^{2/3}(\alpha)\bigr)^{1/4}}
+\frac{\cos^{2/3}(\alpha)}{\bigl(\sin^{2/3}(\alpha)+\cos^{2/3}(\alpha)\bigr)^{1/4}} \\
& = & \bigl(\sin^{2/3}(\alpha)+\cos^{2/3}(\alpha)\bigr)^{3/4}.
\end{eqnarray*}
\end{proof}

\medskip

\section{Fundamental domain bounded by Cygan spheres} \label{sec-fd-bis}

First consider $B$ as given in equation \eqref{eq-A-B}. 
The isometric sphere of $B$ is a Cygan sphere of radius $r_B=1/|s_2^2+it_2|^{1/2}$ with centre $B^{-1}(\infty)$. Likewise,
the isometric sphere of $B^{-1}$ is a Cygan sphere of the same radius with centre $B(\infty)$. By construction $o$, the 
fixed point of $B$, lies on both these spheres and they are both tangent at this point. In this section, we use the
result on the diameters of an finite ${\mathbb R}$-circle to find the smallest Cygan sphere centred at $o$ containing both these 
isometric spheres.

Now consider $\iota A\iota$ and do a similar thing. The isometric sphere of $\iota A\iota$ is a Cygan sphere
of radius $r_A=1/|s_1^2+it_1|^{1/2}$ with centre $\iota A^{-1}\iota(\infty)=\iota A^{-1}(o)$ and the isometric sphere
of $\iota A^{-1}\iota$ is a Cygan sphere of the same radius with centre $\iota A(o)$. Again, we want to find the smallest
Cygan sphere centred at $o$ containing both these isometric spheres. Now apply $\iota$ to find the largest Cygan sphere 
with the images
of these isometric spheres in its exterior. If this sphere has large enough radius, these two spheres will be disjoint from
the isometric spheres of $B$ and $B^{-1}$. Thus the interiors of these four spheres will be disjoint, and the result will
follow as in Section~\ref{sec-discreteness}.

\subsection{A Cygan ball containing the isometric spheres of $B$ and $B^{-1}$}

\begin{proposition}\label{prop-iso-diam}
	The isometric spheres of $B$ and $B^{-1}$ are contained in the Cygan ball ${\mathcal B}$ 
	with centre $o$ and radius $d_B$ where
	$$
	d_B=\frac{2^{1/4}}{|s_2^2+it_2|^{1/2}}
	\left(\left(1-\frac{t_2}{|s_2^2+it_2|}\right)^{1/3}+\left(1+\frac{t_2}{|s_2^2+it_2|}\right)^{1/3}\right)^{3/4}.
	$$
	Furthermore, let $p_+$ and $p_-$ be the points of these two isometric spheres on the boundary of
	${\mathcal B}$. Then $B$ sends $p_+$ to $p_-$.
\end{proposition}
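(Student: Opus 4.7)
The plan is to reduce $\max_{p\in I_B}\rho_0(o,p)$ to the $\mathbb{R}$-circle diameter formula of Lemma~\ref{lem-diam-Rcircle}. To begin, I would verify that $o\in I_B$ by the direct computation $\rho_0(o,B^{-1}(\infty))=1/|s_2^2+it_2|^{1/2}=r_B$; thus finding $d_B$ amounts to determining the largest Cygan distance from $o$ to any other point of the Cygan sphere $I_B$, whose centre is $c=B^{-1}(\infty)$ and whose radius is $r_B$.

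Next I would apply a Cygan isometry, namely the Heisenberg translation taking $c$ to $o$ followed by a Heisenberg rotation. This sends $I_B$ to the Cygan sphere centred at $o$ of radius $r_B$, parametrised by the geographical coordinates $s_{\alpha,\beta}$ of Section~2.4, and sends $o$ itself to a point of the standard form $s_{\alpha_0,0}$ for a specific $\alpha_0\in[0,\pi/2]$. The angle $\alpha_0$ is pinned down by the Cartan invariant $\mathbb{A}(o,c,\infty)=2\alpha_0-\pi/2$; computing this from the standard lifts gives $\mathbb{A}(o,c,\infty)=\arg(s_2^2+it_2)$, so that $\cos^2\alpha_0=\tfrac{1}{2}\bigl(1-t_2/|s_2^2+it_2|\bigr)$ and $\sin^2\alpha_0=\tfrac{1}{2}\bigl(1+t_2/|s_2^2+it_2|\bigr)$.

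The heart of the proof is a direct computation of $\rho_0(s_{\alpha_0,0},s_{\alpha,\beta})$ via $\langle\mathbf{s}_{\alpha_0,0},\mathbf{s}_{\alpha,\beta}\rangle$ on standard lifts. After simplification, the squared Cygan distance is
$$
\rho_0\bigl(s_{\alpha_0,0},s_{\alpha,\beta}\bigr)^2
=2r_B^2\bigl|\sin(\alpha_0+\alpha)-\sqrt{\sin 2\alpha_0\sin 2\alpha}\,e^{-i\beta}\bigr|,
$$
which is maximised over $\beta$ at $e^{-i\beta}=-1$, yielding $2r_B^2 f_{\alpha_0}(\alpha)^2$ for exactly the function $f_{\alpha_0}$ of Lemma~\ref{lem-trig2}. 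Hence the maximum over the full sphere coincides with the maximum over the single $\mathbb{R}$-circle through $s_{\alpha_0,0}$ (the one with $\beta\in\{0,\pi\}$), and Lemma~\ref{lem-trig2} bounds this by $\sqrt{2}\,r_B(\sin^{2/3}\alpha_0+\cos^{2/3}\alpha_0)^{3/4}$. Substituting for $\sin\alpha_0$ and $\cos\alpha_0$ in terms of $s_2$ and $t_2$ then recovers exactly $d_B$; equivalently this is Lemma~\ref{lem-diam-Rcircle} applied to the meridional $\mathbb{R}$-circle on $I_B$ through $o$.

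For the final claim $B(p_+)=p_-$, I would back-translate the maximiser $s_{\alpha^*,\pi}$ through the normalisation to obtain a closed-form expression for $p_+\in I_B$, run the same analysis with $B^{-1}$ in place of $B$ to identify $p_-\in I_{B^{-1}}$, and then verify the identity by direct matrix multiplication, using that $B$ sends $I_B$ to $I_{B^{-1}}$ preserving radius $r_B$ and intertwining the two geographic parametrisations. The main obstacle is the third step: it is not a priori obvious that the maximum of $\rho_0(o,\cdot)$ over a whole Cygan sphere should be attained on a single $\mathbb{R}$-circle, and establishing this requires the explicit distance computation, from which the function $f_{\alpha_0}$ emerges cleanly after optimising over $\beta$.
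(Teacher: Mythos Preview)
Your proposal is correct and reaches the same formula, but it diverges from the paper's argument in two places worth noting.

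For the first part, you normalise and then compute $\rho_0(s_{\alpha_0,0},s_{\alpha,\beta})$ over the \emph{entire} sphere, optimising over $\beta$ to force $e^{-i\beta}=-1$ and thereby land on the meridional $\mathbb{R}$-circle through $o$; Lemma~\ref{lem-trig2} then finishes. The paper instead argues by symmetry: strict convexity of the Cygan sphere makes the maximiser $p_+$ unique, and since the anti-holomorphic involution $\iota_{R_+}$ fixing the meridian $R_+$ through $o$ is a Cygan isometry preserving $I_B$ and fixing $o$, the maximiser must equal its own image under $\iota_{R_+}$, hence lies on $R_+$. Lemma~\ref{lem-diam-Rcircle} is then applied directly, without ever parametrising the full sphere. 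The ``main obstacle'' you flag --- that the maximum over the whole sphere should occur on a single $\mathbb{R}$-circle --- is thus exactly what the paper's symmetry argument sidesteps. Your route is more computational but self-contained; the paper's is shorter and explains conceptually why the reduction to one meridian is forced.

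For $B(p_+)=p_-$, you propose to write down $p_\pm$ explicitly and check by matrix multiplication. The paper instead finds an infinite $\mathbb{R}$-circle $R_0$ through $o$ whose involution $\iota_{R_0}$ swaps $B^{-1}(\infty)$ with $B(\infty)$; this interchanges the two isometric spheres and (since $\iota_{R_0}$ fixes $o$) swaps $p_+$ with $p_-$. Decomposing the parabolic as $B=\iota_{R_0}\iota_{R_+}$ then gives $B(p_+)=\iota_{R_0}\iota_{R_+}(p_+)=\iota_{R_0}(p_+)=p_-$ in one line, because $p_+\in R_+$. This avoids any explicit coordinates for $p_\pm$; your direct verification would also succeed but is heavier.
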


\begin{proof} 
We want to find the largest Cygan distance from $o$ to another point on the isometric sphere of $B$, respectively $B^{-1}$.
Since Cygan spheres are strictly convex, each of these points $p_+$, respectively $p_-$, must be unique. 
Let $R_+$, respectively $R_-$, 
be the meridian of the isometric sphere of $B$, respectively $B^{-1}$, passing through $o$. We claim that $p_+$ lies on $R_+$.
Observe that there is an anti-holomorphic involution $\iota_{R_+}$ whose fixed point set is $R_+$ which maps the
isometric sphere of $B$ to itself isometrically. Thus if $p_+$ does not lie on $R_+$ then $\iota_{R_+}(p_+)$ is a point of the
isometric sphere of $B$ different from $p_+$ and the same distance from $o$. This contradicts uniqueness of $p_+$. 
Thus $p_+$ lies on $R_+$ and similarly $p_-$ lies on $R_-$.

This means we can use Lemma~\ref{lem-diam-Rcircle} to find the distances betwen $o$ and $p_\pm$. 
The Cygan isometric sphere of $B$ has radius $r_B=1/|s_2^2+it_2|^{1/2}$ 
and centre
$$
B^{-1}(\infty)=\left( \frac{s_2 e^{i\theta_2}}{s_2^2+it_2},\  \frac{t_2}{|s_2^2+it_2|^2}\right).
$$
Simillarly, The Cygan isometric sphere of $B^{-1}$ has radius $r_{B^{-1}}=r_B =1/|s_2^2+it_2|^{1/2}$ and centre
$$
B(\infty)=\left( \frac{s_2 e^{i\theta_2}}{-s_2^2+it_2},\  \frac{-t_2}{|s_2^2+it_2|^2}\right).
$$ 
By construction, $o$ lies on both these spheres. It is easy to see that
$$
{\mathbb A}(o,B^{-1}(\infty),\infty)=\arg(s_2^2+it_2),\quad {\mathbb A}(o,B(\infty),\infty)=\arg(s_2^2-it_2).
$$
Define $\alpha$ by $2\alpha-\pi/2={\mathbb A}(o,B^{-1}(\infty),\infty)$. Then 
${\mathbb A}(o,B(\infty),\infty)=-2\alpha+\pi/2$. This means that
$$
\cos(2\alpha)=\frac{t_2}{|s_2^2+it_2|}.
$$
	Hence
	\begin{eqnarray*}
	\sin^2(\alpha) & = & \frac{1-\cos(2\alpha)}{2} = \frac{1}{2}\left(1-\frac{t_2}{|s_2^2+it_2|}\right), \\
	\cos^2(\alpha) & = & \frac{1+\cos(2\alpha)}{2} = \frac{1}{2}\left(1+\frac{t_2}{|s_2^2+it_2|}\right).
	\end{eqnarray*}
	Therefore the isometric sphere of $B$ is contained in the ball with centre $o$ and radius
	\begin{eqnarray*}
	d_B &  = & 2^{1/2}r_B\,\bigl(\cos^{2/3}(\alpha)+\sin^{2/3}(\alpha)\bigr)^{3/4} \\
	& = & \frac{2^{1/4}}{|s_2^2+it_2|^{1/2}}
	\left(\left(1-\frac{t_2}{|s_2^2+it_2|}\right)^{1/3}+\left(1+\frac{t_2}{|s_2^2+it_2|}\right)^{1/3}\right)^{3/4}.
	\end{eqnarray*}
The same is true for the isometric sphere of $B^{-1}$. This completes the proof of the first part.

To prove the second part, we observe that there is an infinite ${\mathbb R}$-circle $R_0$ passing through $o$ so that
the inversion $\iota_{R_0}$ fixing $R_0$ interchanges $B^{-1}(\infty)$ and $B(\infty)$. Hence this inversion also interchanges
the isometric spheres of $B$ and $B^{-1}$. Since it fixes the origin, it must also interchange $p_+$ and $p_-$. Assume
that $s_2\neq 0$. Then $R_+$ and $R_-$ are the unique meridians of these two isometric spheres passing through $o$. Hence
$B=\iota_{R_0}\iota_{R_+}=\iota_{R_-}\iota_{R_0}$. Thus by construction
$$
B(p_+)=\iota_{R_0}\iota_{R_+}(p_+)=\iota_{R_0}(p_+)=p_-.
$$
Finally, when $s_2=0$ then assume without loss of generality that $t_2>0$ (otherwise replace $B$ with $B^{-1}$). In this case,
$p_+$, $o$ are the north and south poles of the isometric sphere of $B$ and $p_-$, $o$ are the south and north poles of the 
isometric sphere of $B^{-1}$. It is clear that $B$ sends $p_+$ to $p_-$.
\end{proof}

\medskip

\subsection{Proof of Theorem \ref{thm-main0}$(1)$}

To obtain condition $(1)$ of Theorem \ref{thm-main0}, we do a similar thing  for $A$ using a Cygan metric where $o$ is the infinite point. We use the involution $\iota$
given in \eqref{eq-iota}. Recall that $\iota$ maps the Cygan sphere centred at $o$ of radius $d$ to the Cygan sphere centred at $o$ with radius $1/d$.
\begin{proof}
 By a  similar argument to Proposition~\ref{prop-iso-diam} shows that the isometric spheres of $\iota A\iota$ and 
$\iota A^{-1}\iota$ are contained in the Cygan ball with centre $o$ and radius $d_A$ where
	$$
	d_A=\frac{2^{1/4}}{|s_1^2+it_1|^{1/2}}
	\left(\left(1-\frac{t_1}{|s_1^2+it_1|}\right)^{1/3}+\left(1+\frac{t_1}{|s_1^2+it_1|}\right)^{1/3}\right)^{3/4}.
	$$

Therefore, the image under $\iota$ of these spheres is contained in the exterior of a Cygan sphere with centre $o$ and radius
$1/d_A$. Hence, if $d_B\le 1/d_A$ then we can use the Klein combination theorem, Proposition~\ref{KCT},
to conclude that $A$ and $B$ freely generate $\langle A,\,B\rangle$. The condition $d_B\le 1/d_A$ is equivalent to
$$
1\ge d_Ad_B = \frac{2^{1/2}}{|s_2^2+it_2|^{1/2}|s_1^2+it_1|^{1/2}}
\left(\left(1-\frac{t_2}{|s_2^2+it_2|}\right)^{1/3}+\left(1+\frac{t_2}{|s_2^2+it_2|}\right)^{1/3}\right)^{3/4}\\$$

$$\times \left(\left(1-\frac{t_1}{|s_1^2+it_1|}\right)^{1/3}+\left(1+\frac{t_1}{|s_1^2+it_1|}\right)^{1/3}\right)^{3/4}.
$$
Multiplying through by $|s_1^2+it_1|^{1/2}|s_2^2+it_2|^{1/2}$ we obtain condition (1) of Theorem~\ref{thm-main0}.
\end{proof}

\medskip

Note, that if we simply wanted to obtain condition (1') of Theorem~\ref{thm-main1}, we could use the triangle inequality to say that
any two points on a Cygan sphere of radius $r$ are a distance at most $2r$ apart. Therefore all points on the isometric spheres 
of $B$ and $B^{-1}$ lie within a Cygan distance $2/|s_2^2+it_2|^{1/2}$. Similarly, all points on the isometric spheres of 
$\iota A\iota$ and $\iota A^{-1}\iota$ lie within a a Cygan distance $2/|s_1^2+it_1|^{1/2}$. Arguing as above, but with these weaker bounds, we obtain the condition.
$$
1 \ge \frac{2}{|s_1^2+it_1|^{1/2}}\,\frac{2}{|s_2^2+it_2|^{1/2}}.
$$
Multiplying through by $|s_1^2+it_1|^{1/2}|s_2^2+it_2|^{1/2}$ we obtain condition (1') of Theorem~\ref{thm-main1}.

\subsection{Criteria for equality in Theorem \ref{thm-main0}$(1)$}

We now briefly discuss what happens when we have equality in the criterion of Theorem \ref{thm-main0}$(1)$. By construction,
there are points $p_+$ and $p_-=B(p_+)$ on the isometric spheres of $B$ and $B^{-1}$ so that both $p_+$ and $p_-$ are
a distance $d_B$ from $o$. Similarly, there are points $q_+$ and $q_-$ on the
$\iota$ images of the isometric spheres of $\iota A\iota$ and $\iota A^{-1}\iota$ so that $A$ sends $q_+$ to $q_-=A(q_+)$ and
$q_+$ and $q_-$ are a distance $1/d_A=d_B$ from $o$. If we have $q_-=p_+$ and $q_+=p_-$ then $p_+$ is a fixed point
of $AB$ and $p_-$ is a fixed point of $BA$. We claim this only happens when either (a) $s_1=s_2=0$ and $t_1t_2=4$ or
(b) $t_1=t_2=0$, $\theta_1=\theta_2$ and $s_1s_2=4$. In the first case $AB$ is screw parabolic with angle $\pi$ and in the second case $AB$ is unipotent.

Write $-s_2^2+it_2=ir_2^2e^{2i\alpha_2}$. 
Define $\phi_2$ by
$$
\cos(\phi_2)=\frac{\sin^{1/3}(\alpha_2)}{\bigl(\sin^{2/3}(\alpha_2)+\cos^{2/3}(\alpha_2)\bigr)^{1/2}},\quad 
\sin(\phi_2)=\frac{\cos^{1/3}(\alpha_2)}{\bigl(\sin^{2/3}(\alpha_2)+\cos^{2/3}(\alpha_2)\bigr)^{1/2}}
$$
and $d_B$ by
$$
d_B=r_2^{-1}\bigl(\sqrt{2\cos(\phi_2)\sin(\alpha_2)}+\sqrt{2\sin(\phi_2)\cos(\alpha_2)}\bigr).
$$

Let $T_+$, respectively $T_-$, be the Heisenberg translation taking $o$ to $B^{-1}(\infty)$, respectively $B(\infty)$. 
Then
$$
T_+^{-1}(o)=\left(\begin{matrix} 
	-ir_2^{-2}e^{-2i\alpha_2} \\ i\sqrt{2\sin(2\alpha_2)}r_2^{-1}e^{2i\alpha_2+i\theta_2} \\
	1 \end{matrix}\right), \quad
T_-^{-1}(o)=\left(\begin{matrix} 
	ir_2^{-2}e^{2i\alpha_2} \\ i\sqrt{2\sin(2\alpha_2)}r_2^{-1}e^{-2i\alpha_2+i\theta_2} \\
	1 \end{matrix}\right).
$$
The points on the ${\mathbb R}$-circle furthest from these two points are
$$
\left(\begin{matrix} 
	-ir_2^{-2}e^{-2i\phi_2} \\ -i\sqrt{2\sin(2\phi_2)}r_2^{-1}e^{-i\phi_2+3i\alpha_2+i\theta_2} \\ 
	1 \end{matrix}\right), \quad
\left(\begin{matrix} 
	ir_2^{-2}e^{2i\phi_2} \\ i\sqrt{2\sin(2\phi_2)}r_2^{-1}e^{i\phi_2-3i\alpha_2+i\theta_2} \\ 
	1 \end{matrix}\right).
$$
The images of these two points under $T_+$ and $T_-$ are $p_+$ and $p_-$. As vectors in standard form these are:
$$
{\bf p}_+=\left(\begin{matrix}
	-d_B^2 e^{-i\phi_2+i\alpha_2} \\
	d_B\sqrt{2\cos(\phi_2-\alpha_2)}e^{-2i\phi_2+2i\alpha_2+i\theta_2} \\
	1 \end{matrix}\right), \quad
{\bf p}_-=\left(\begin{matrix}
	-d_B^2 e^{i\phi_2-i\alpha_2} \\
	-d_B\sqrt{2\cos(\phi_2-\alpha_2)}e^{2i\phi_2-2i\alpha_2+i\theta_2} \\
	1 \end{matrix}\right).
$$

A similar caculation shows that
$$
B:{\bf p}_+ \longmapsto e^{-2i\phi_2+2i\alpha_2}{\bf p}_-.
$$
Thus, $B$ sends the vector ${\bf p}_+$ to the vector ${\bf p}_-$ with a non-trivial multiplier.
Applying the involution $\iota$ and changing all the indices from $2$ to $1$ gives
$$
\iota{\bf q}_+=\left(\begin{matrix}
	-d_A^2 e^{-i\phi_1+i\alpha_1} \\
	d_A\sqrt{2\cos(\phi_1-\alpha_1)}e^{-2i\phi_1+2i\alpha_1+i\theta_1} \\
	1 \end{matrix}\right), \quad
\iota{\bf q}_-=\left(\begin{matrix}
	-d_A^2 e^{i\phi_1-i\alpha_1} \\
	-d_A\sqrt{2\cos(\phi_1-\alpha_1)}e^{2i\phi_1-2i\alpha_1+i\theta_1} \\
	1 \end{matrix}\right),
$$
where 
\begin{eqnarray*}
	\cos(\phi_1) & = & \frac{\sin^{1/3}(\alpha_1)}{\bigl(\sin^{2/3}(\alpha_1)+\cos^{2/3}(\alpha_1)\bigr)^{1/2}},\quad 
	\sin(\phi_1) \ =\ \frac{\cos^{1/3}(\alpha_1)}{\bigl(\sin^{2/3}(\alpha_1)+\cos^{2/3}(\alpha_1)\bigr)^{1/2}}, \\
	d_A & = & r_1^{-1}\bigl(\sqrt{2\cos(\phi_1)\sin(\alpha_1)}+\sqrt{2\sin(\phi_1)\cos(\alpha_1)}\bigr).
\end{eqnarray*}
Applying $\iota$ we see that
\begin{eqnarray*}
	{\bf q}_+ & = & \left(\begin{matrix}
		1 \\
		d_A\sqrt{2\cos(\phi_1-\alpha_1)}e^{-2i\phi_1+2i\alpha_1+i\theta_1} \\
		-d_A^2 e^{-i\phi_1+i\alpha_1}  \end{matrix}\right) 
	\sim \left(\begin{matrix}
		-d_A^{-2} e^{i\phi_1-i\alpha_1} \\
		-d_A^{-1} \sqrt{2\cos(\phi_1-\alpha_1)}e^{-i\phi_1+i\alpha_1+i\theta_1} \\
		1 \end{matrix}\right), \\
	\\
	{\bf q}_- & = &\left(\begin{matrix}
		1 \\
		-d_A\sqrt{2\cos(\phi_1-\alpha_1)}e^{2i\phi_1-2i\alpha_1+i\theta_1} \\
		-d_A^2 e^{i\phi_1-i\alpha_1}  \end{matrix}\right) 
	\sim \left(\begin{matrix}
		-d_A^{-2} e^{-i\phi_1+i\alpha_1} \\
		d_A^{-1} \sqrt{2\cos(\phi_1-\alpha_1)}e^{i\phi_1-i\alpha_1+i\theta_1} \\
		1 \end{matrix}\right).
\end{eqnarray*}
Thus we have $$
A:{\bf q}_+ \longmapsto {\bf q}_-.
$$
Hence, $A$ sends the vector ${\bf q}_+$ to the vector ${\bf q}_-$ with a multiplier $1$.
Now, ${\bf p}_+ = {\bf q}_-$ and ${\bf p}_- = {\bf q}_+$ if and only if either (a) $s_1=s_2=0$ and $t_1t_2=4$ or
(b) $t_1=t_2=0$, $\theta_1=\theta_2$ and $s_1s_2=4$. In the first case $AB$ is a screw parabolic with angle $\pi$ and having fixed point ${ p}_+$. This is because the non-trivial multiplier is an eigenvalue of $AB$ associated to its fixed point. Also, in the second case $AB$ is an unipotent element.

\section{Domain bounded by fans}\label{sec-fd-fans}

Consider the infinite fan $F^{(\infty)}_{ke^{i\theta_1}}$ where $\theta_1$ is the angle associated to the Heisenberg translation
$A$ given by \eqref{eq-A-B} and $k$ is any real number.  We claim that $A$ sends $F_{ke^{i\theta_1}}^{(\infty)}$ to 
$F_{(k+s_1)e^{i\theta_1}}^{(\infty)}$. This is most easily seen by considering the standard lifts of points on the fans.
\begin{eqnarray*}
\lefteqn{
\left(\begin{matrix} 1 & -\sqrt{2}s_1e^{-i\theta_1} & -s_1^2+it_1 \\ 0 & 1 & \sqrt{2}s_1e^{i\theta_1} \\ 0 & 0 & 1 \end{matrix}\right)
\left(\begin{matrix} -k^2-a^2+ib-2ika \\ \sqrt{2}(k+ia)e^{i\theta_1} \\ 1 \end{matrix}\right) }\\
& = & \left(\begin{matrix} -(k+s_1)^2-a^2+ib-2i(k+s_1)a+it_1 \\ \sqrt{2}(k+s_1+ia)e^{i\theta_1} \\ 1 \end{matrix}\right).
\end{eqnarray*}
The images of $F^{(\infty)}_{ke^{i\theta_1}}$ and $F^{(\infty)}_{(k+s_1)e^{i\theta_1}}$ under vertical projection $\Pi_V$ are the lines
$$
x\cos(\theta_1)+y\sin(\theta_1)=k,\quad x\cos(\theta_1)+y\sin(\theta_1)=k+s_1.
$$

The slab bounded by the fans $F^{(\infty)}_{ke^{i\theta_1}}$ and $F^{(\infty)}_{(k+s_1)e^{i\theta_1}}$ is a fundamental region for 
$\langle A\rangle$. The image of the slab under vertical projection $\Pi_V$ is the strip 
$$
S_A(k)=\left\{(x,y)\ :\ 
k\le  x\cos(\theta_1)+y\sin(\theta_1) \le k+s_1 \right\}.
$$

\begin{figure}[htbp]
  \includegraphics[width=0.7\textwidth]{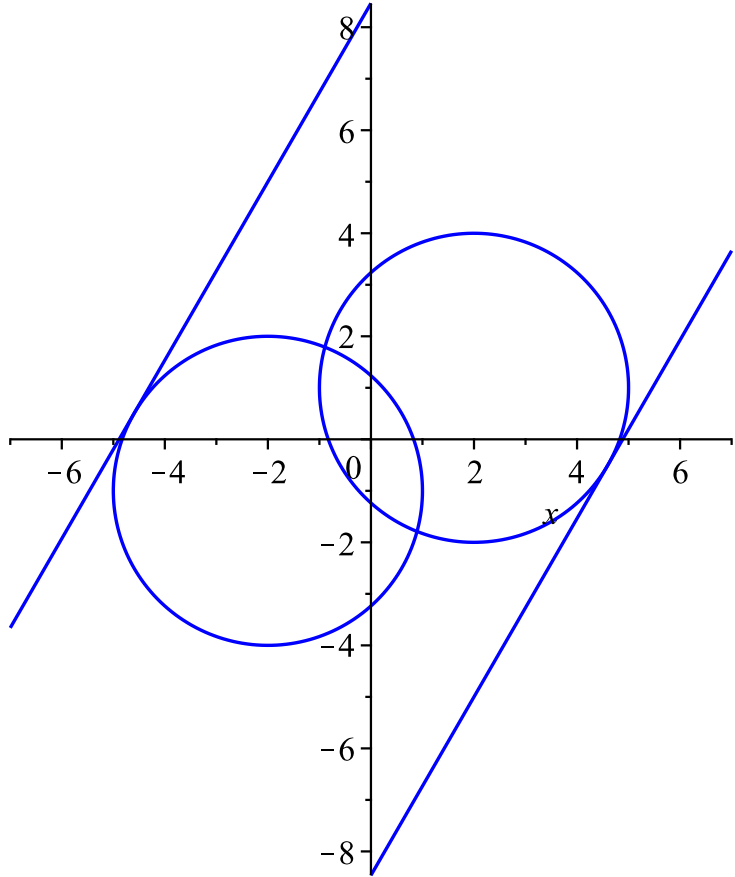}
  \caption{The vertical projection of the isometric spheres of $B$ and $B^{-1}$ and strip which is the vertical projection of
a fundamental domain for $\langle A\rangle$ bounded by two infinite fans. We need the projections of both isometric spheres
to lie in the strip.}\label{fig:1}
\end{figure}

Therefore, if we can find a fundamental domain $D_B$ for $\langle B\rangle$ containing the complement of this slab then we can
apply the Klein combination theorem to conclude that $\langle A,B\rangle$ is freely generated by $A$ and $B$. In particular,
this is true if the vertical projection of $\partial D_B$ is contained in $S_A$ and the vertical projection of $D_B$ contains 
the complement of $S_A$. To see this, consider a point in the complement of $S_A$. By construction, its pre-image under 
vertical projection contains at least one point of $D_B$ and no points in its boundary. Since it is connected and path connected
(the fibres of vertical projection are copies of the real line) all points in this preimage are contained in $D_B$ as required.

In what follows, we give two different fundamental domains for $\langle B\rangle$.

\subsection{Proof of Theorem \ref{thm-main0} parts (2) and (3)}

\begin{proposition}\label{prop-fan-bisector}
Let $(s_1e^{i\theta_1},t_1)$ and $(s_2e^{i\theta_2},t_2)$ be elements of the Heisenberg group with $s_1$ and $s_2$ both non-zero. 
Replacing one of them by its inverse if necessary, we suppose $-\pi/2\le (\theta_1-\theta_2)\le \pi/2$. 
Let $A$ and $B$ given by \eqref{eq-A-B} be the 
associated Heisenberg translations fixing $\infty$ and $o$ respectively. Suppose that
$$
s_1\,|s_2^2+it_2|^{1/2} \ge 
\frac{2~s_2}{|s_2^2+it_2|^{1/2}}\,\cos(\theta_1-\theta_2)+2. 
$$
Then the vertical projection of the isometric spheres of $B$ and $B^{-1}$ are contained in the strip $S_A$.
\end{proposition}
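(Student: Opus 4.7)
The plan is to reduce the containment problem to a one-dimensional Euclidean packing inequality via vertical projection. First I would observe that a Cygan sphere of radius $r$ centred at $(\zeta_0,v_0)\in\mathfrak{N}$ projects under $\Pi_V$ onto the closed Euclidean disk of radius $r$ centred at $\zeta_0$: after squaring, its defining equation reads $|\zeta-\zeta_0|^4+(v-v_0+2\Im(\zeta\bar\zeta_0))^2=r^4$, which is solvable in the vertical coordinate $v$ if and only if $|\zeta-\zeta_0|\le r$. Consequently $\Pi_V(I_B)$ and $\Pi_V(I_{B^{-1}})$ are two closed Euclidean disks in $\C$ of common radius $r_B=1/|s_2^2+it_2|^{1/2}$ centred at the $\zeta$-coordinates of $B^{-1}(\infty)$ and $B(\infty)$.

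Second, I would compute these two centres explicitly using the formulas from Section~2, obtaining $s_2 e^{i\theta_2}/(s_2^2+it_2)$ and $s_2 e^{i\theta_2}/(-s_2^2+it_2)$, and then simplify their difference to
$$
\Pi_V B^{-1}(\infty)-\Pi_V B(\infty)=\frac{2s_2^3}{|s_2^2+it_2|^2}\,e^{i\theta_2}.
$$
The strip $S_A(k)$ is an infinite band of width $s_1$ whose boundary lines have unit normal $e^{i\theta_1}$, so the quantity controlling containment is the signed projection of this centre-separation onto $e^{i\theta_1}$, namely
$$
\Delta=\frac{2s_2^3}{|s_2^2+it_2|^2}\cos(\theta_1-\theta_2),
$$
which is non-negative by the standing assumption $-\pi/2\le \theta_1-\theta_2\le \pi/2$.

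Third, two closed disks of radius $r_B$ whose centres are separated in direction $e^{i\theta_1}$ by $\Delta\ge 0$ both fit inside some translate of the strip of width $s_1$ if and only if $\Delta+2r_B\le s_1$; in that case one picks $k$ so that the midpoint of the two projected centres sits at $k+s_1/2$, placing the disks symmetrically inside $S_A(k)$. Substituting $\Delta$ and $r_B=1/|s_2^2+it_2|^{1/2}$ into this packing inequality and multiplying through by $|s_2^2+it_2|^{1/2}$ yields precisely the hypothesis in the form of Theorem~\ref{thm-main0}(2).

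I do not anticipate any serious obstacle. The only calculation requiring a little care is passing from the matrix entries of $B^{\pm 1}$ to Heisenberg coordinates of $B^{\pm 1}(\infty)$ and simplifying the complex difference cleanly; once the separation vector is identified as a real positive multiple of $e^{i\theta_2}$, the reduction to a one-dimensional packing question is transparent and the choice of $k$ is automatic. As a consistency check, setting $t_2=0$ collapses the inequality to $s_1 s_2\ge 2\cos(\theta_1-\theta_2)+2=4\cos^2((\theta_1-\theta_2)/2)$, matching condition (2') of Theorem~\ref{thm-main1}.
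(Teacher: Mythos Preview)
Your proposal is correct and follows essentially the same route as the paper: identify $\Pi_V(I_{B^{\pm1}})$ as Euclidean disks of radius $r_B=|s_2^2+it_2|^{-1/2}$ centred at $s_2e^{i\theta_2}/(\pm s_2^2+it_2)$, then determine the minimal width (in the $e^{i\theta_1}$ direction) of a strip containing both. The only cosmetic difference is that the paper parametrises each boundary circle and computes the range of $x\cos\theta_1+y\sin\theta_1$ explicitly, whereas you compute the centre-separation vector $\dfrac{2s_2^3}{|s_2^2+it_2|^2}e^{i\theta_2}$ first and then project; the resulting packing inequality $\Delta+2r_B\le s_1$ is identical to the paper's penultimate displayed line.
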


\begin{proof} (Proposition~\ref{prop-fan-bisector}).
The vertical projection of the isometric sphere of $B$ is a circle with radius $r_2=1/|s_2^2+it_2|^{1/2}$ and 
centre, 
$$
\frac{s_2e^{i\theta_2}}{s_2^2+it_2}=\frac{s_2e^{i\theta_2}(s_2^2-it_2)}{|s_2^2+it_2|^2}.
$$
Similarly, The vertical projection of the isometric sphere of $B^{-1}$ is a circle with radius 
$r_2=1/|s_2^2-it_2|^{1/2}$ and 
centre, 
$$ \frac{s_2e^{i\theta_2}}{-s_2^2+it_2}=\frac{-s_2e^{i\theta_2}(s_2^2+it_2)}{|s_2^2+it_2|^2}.
$$
Now, for a point $(x,y)$ on the vertical projection of the isometric sphere of $B$ we have
\begin{eqnarray*}
x & = & \frac{s_2(s_2^2\cos(\theta_2)+t_2\sin(\theta_2))}{|s_2^2+it_2|^2}+\frac{\cos(\phi)}{|s_2^2+it_2|^{1/2}},\\
y & = & \frac{s_2(s_2^2\sin(\theta_2)-t_2\cos(\theta_2))}{|s_2^2+it_2|^2}+\frac{\sin(\phi)}{|s_2^2+it_2|^{1/2}}.
\end{eqnarray*}
Therefore
$$
x\cos(\theta_1)+y\sin(\theta_1) 
= \frac{s_2^3\cos(\theta_1-\theta_2)-s_2t_2\sin(\theta_1-\theta_2)}{|s_2^2+it_2|^2}+\frac{\cos(\phi-\theta_1)}{|s_2^2+it_2|^{1/2}}.
$$
Similarly, for a point on the vertical projection of the isometric sphere of $B^{-1}$ we have
$$
x\cos(\theta_1)+y\sin(\theta_1) 
= \frac{-s_2^3\cos(\theta_1-\theta_2)-s_2t_2\sin(\theta_1-\theta_2)}{|s_2^2+it_2|^2}+\frac{\cos(\phi-\theta_1)}{|s_2^2+it_2|^{1/2}}.
$$
As $\cos(\theta_1-\theta_2)\ge 0$, we can see that for all points $(x,y)$ in the vertical projections of the isometric spheres
of $B$ and $B^{-1}$ we have
\begin{eqnarray*}
\lefteqn{  
\frac{-s_2^3\cos(\theta_1-\theta_2)}{|s_2^2+it_2|^2}-\frac{1}{|s_2^2+it_2|^{1/2}}
-\frac{s_2t_2\sin(\theta_1-\theta_2)}{|s_2^2+it_2|^2} } \\
& \le & x\cos(\theta_1)+y\sin(\theta_1) \\
& \le & 
 \frac{s_2^3\cos(\theta_1-\theta_2)}{|s_2^2+it_2|^2}
 +\frac{1}{|s_2^2+it_2|^{1/2}}
 -\frac{s_2t_2\sin(\theta_1-\theta_2)}{|s_2^2+it_2|^2}.
 \end{eqnarray*}
Hence these vertical projections lie in a strip of the form $S_A(k)$ provided
$$
 \frac{2s_2^3\cos(\theta_1-\theta_2)}{|s_2^2+it_2|^2} +\frac{2}{|s_2^2+it_2|^{1/2}}\le s_1.
 $$
Multiplying through by $|s_2^2+it_2|^{1/2}$, we get following condition
$$
s_1\,|s_2^2+it_2|^{1/2} \ge 
\frac{2~s_2}{|s_2^2+it_2|^{1/2}}\,\cos(\theta_1-\theta_2)+2. 
$$
\end{proof}

\medskip

The case of Theorem~\ref{thm-main0} arising from condition (2) follows immediately by the above argument.
Swapping the roles of $A$ and $B$ also gives the case arising from condition (3) of Theorem~\ref{thm-main0}.
\medskip

\begin{proposition}\label{prop-fan-bisector-2}
Let $(s_1e^{i\theta_1},t_1)$ and $(0,t_2)$ be elements of the Heisenberg group.
Let $A$ and $B$ given by \eqref{eq-A-B} be the 
associated Heisenberg translations fixing $\infty$ and $o$ respectively with $s_2=0$. Suppose that
$$
s_1\,|t_2|^{1/2} \ge 2.
$$
Then the vertical projection of the isometric spheres of $B$ and $B^{-1}$ are contained in the strip $S_A$.
\end{proposition}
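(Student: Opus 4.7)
The plan is to mirror the proof of Proposition~\ref{prop-fan-bisector}, exploiting the substantial simplifications coming from the hypothesis $s_2=0$. First I would substitute $s_2=0$ into the formulas for the centres of the isometric spheres of $B$ and $B^{-1}$, obtaining $B^{-1}(\infty)=(0,\,1/t_2)$ and $B(\infty)=(0,\,-1/t_2)$, each being a Cygan sphere of radius $r_B=1/|t_2|^{1/2}$. In particular, both centres project to $0\in\C$ under the vertical projection $\Pi_V$, and the parameter $\theta_2$, which is undefined when $s_2=0$, never appears in what follows.

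Next I would describe the vertical projection of each isometric sphere. Parametrising the boundary of the projection by $\zeta = r_B e^{i\phi}$ with $\phi\in[0,2\pi)$, exactly as in the proof of Proposition~\ref{prop-fan-bisector}, I would compute
$$
x\cos(\theta_1)+y\sin(\theta_1)=r_B\cos(\phi-\theta_1)\in[-r_B,\,r_B].
$$
Since the full vertical projection is a closed disk of radius $r_B$ centred at $0$, and the functional $(x,y)\mapsto x\cos(\theta_1)+y\sin(\theta_1)$ is linear, convexity extends this bound to every point in the projection of each isometric sphere.

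Finally I would choose $k=-r_B$, so that the strip $S_A(k)$ becomes $\{(x,y):-r_B\le x\cos(\theta_1)+y\sin(\theta_1)\le -r_B+s_1\}$; the vertical projections of both isometric spheres then lie in it precisely when $r_B\le -r_B+s_1$, i.e.\ $2r_B\le s_1$, which is the hypothesis $s_1|t_2|^{1/2}\ge 2$. There is no substantive obstacle here: the two isometric spheres have coincident vertical projections and the $\cos(\theta_1-\theta_2)$ term from Proposition~\ref{prop-fan-bisector} disappears, so the estimate reduces to requiring the diameter of a single disk to fit inside a strip of width $s_1$.
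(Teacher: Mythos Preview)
Your proposal is correct and follows essentially the same route as the paper's proof: both specialise the argument of Proposition~\ref{prop-fan-bisector} to $s_2=0$, observe that the vertical projections of the two isometric spheres are the common disk of radius $1/|t_2|^{1/2}$ centred at the origin, bound the linear functional $x\cos(\theta_1)+y\sin(\theta_1)$ by $\pm 1/|t_2|^{1/2}$, and conclude that the disk fits in a strip of width $s_1$ exactly when $s_1|t_2|^{1/2}\ge 2$. Your explicit choice of $k=-r_B$ and the convexity remark are minor elaborations on what the paper leaves implicit.
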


\begin{proof}
This follows from the proof of Proposition~\ref{prop-fan-bisector} but is much simpler. 
In this case, the vertical projections of the isometric spheres of both $B$ and $B^{-1}$ are centred at the origin. 

If $(x,y)$ is a point on the vertical projection of either of these two isometric spheres are given respectively by
$$ 
x=\frac{\cos(\phi)}{|t_2|^{1/2}},\quad y=\frac{\sin(\phi)}{|t_2|^{1/2}}.
$$
Hence
$$
\frac{-1}{|t_2|^{1/2}}\le x\cos(\theta_1)+y\sin(\theta_1)=\frac{\cos(\phi-\theta_1)}{|t_2|^{1/2}}\le \frac{1}{|t_2|^{1/2}}.
$$
For these two Cygan spheres to lie in a strip of the form $S_A(k)$ we must have $2/|t_2|^{1/2}\le s_1$. This gives the result.
\end{proof}

\medskip

\subsection{Criteria for equality in Theorem \ref{thm-main0}(2)}

When we have equality in Proposition~\ref{prop-fan-bisector}, the bisectors for $B$ and the fans for $A$ are tangent at 
the two points
\begin{eqnarray*}
{\bf q}_+ & = & \left(\begin{matrix}
-1/|s_2^2+it_2|-2s_2(s_2^2+it_2)e^{i\theta_1-i\theta_2}/|s_2^2+it_2|^{5/2}-(s_2^2-it_2)/|s_2^2+it_2|^2 \\
\sqrt{2}s_2(s_2^2-it_2)e^{i\theta_2}/|s_2^2+it_2|^2+\sqrt{2}e^{i\theta_1}/|s_2^2+it_2|^{1/2} \\
1 \end{matrix}\right), \\
{\bf q}_- & = & \left(\begin{matrix}
-1/|s_2^2+it_2|-2s_2(s_2^2-it_2)e^{i\theta_1-i\theta_2}/|s_2^2+it_2|^{5/2}-(s_2^2+it_2)/|s_2^2+it_2|^2 \\
-\sqrt{2}s_2(s_2^2+it_2)e^{i\theta_2}/|s_2^2+it_2|^2-\sqrt{2}e^{i\theta_1}/|s_2^2+it_2|^{1/2} \\
1 \end{matrix}\right).
\end{eqnarray*}
We can easily verify that ${\bf q}_+$ (resp. ${\bf q}_-$) lies on the isometric sphere of $B$ (resp. $B^{-1}$). 
A calculation shows that
\begin{eqnarray*}
B{\bf q}_+ & = & \left(\begin{matrix}
-1/|s_2^2+it_2|-2s_2(s_2^2+it_2)e^{i\theta_1-i\theta_2}/|s_2^2+it_2|^{5/2}-(s_2^2-it_2)/|s_2^2+it_2|^2 \\
\sqrt{2}s_2e^{i\theta_2}/|s_2^2+it_2|+\sqrt{2}(s_2^2+it_2)^2e^{i\theta_1}/|s_2^2+it_2|^{5/2} \\
(s_2^2-it_2)/|s_2^2+it_2|
\end{matrix}\right) \\
& = & \frac{s_2^2-it_2}{|s_2^2+it_2|} \left(\begin{matrix}
-1/|s_2^2+it_2|-2s_2(s_2^2+it_2)^2e^{i\theta_1-i\theta_2}/|s_2^2+it_2|^{7/2}-(s_2^2+it_2)/|s_2^2+it_2|^2 \\
-\sqrt{2}s_2(s_2^2+it_2)e^{i\theta_2}/|s_2^2+it_2|^2-\sqrt{2}(s_2^2+it_2)^3e^{i\theta_1}/|s_2^2+it_2|^{7/2} \\
1 \end{matrix}\right).
\end{eqnarray*}
This shows that, $B$ projectively maps ${\bf q}_+$ to ${\bf q}_-$ if and only if $(s_2^2+it_2)^3=|s_2^2+it_2|^3$
which is equivalent to $t_2=0$ as $s_2^2\ge 0$.
Similarly, $A{\bf q}_-={\bf q}_+$ if and only if both
\begin{eqnarray*}
(s_1^2-it_1)|s_2^2+it_2|^{1/2} & = & 2s_1+\frac{2s_1s_2(s_2^2+it_2)e^{i\theta_2-i\theta_1}}{|s_2^2+it_2|^{3/2}}
+\frac{4s_2it_2e^{i\theta_1-\theta_2}}{|s_2^2+it_2|^2}-\frac{2it_2}{|s_2^2+it_2|^{3/2}},   \\
s_1|s_2^2+it_2|^{1/2} &= & 2+\frac{2s_2^3e^{i\theta_2-i\theta_1}}{|s_2^2+it_2|^{3/2}}.
\end{eqnarray*}

Thus, ${\bf q}_-$ is mapped to ${\bf q}_+$ by $A$ only when $t_1=t_2=0$, $\theta_1=\theta_1$ and $s_1s_2=4$. 

\medskip

\subsection{Proof of Theorem \ref{thm-main0}(4)}

We now construct a fundamental domain for $\langle B\rangle$ bounded by two fans. To do this, we apply the involution
$\iota$ given by \eqref{eq-iota}. 

Fix $k\in{\mathbb R}-\{0\}$. Consider the fan with with vertex the origin $o$ given by 
$$
F_k^{(o)}=\left\{\left(\begin{matrix} 1 \\ \sqrt{2}(k+ia) \\ -k^2-a^2+ib-2ika \end{matrix}\right)\ :\ (a,b)\in{\mathbb R}^2\right\}.
$$
Putting this into Heisenberg coordinates $(x+iy,v)$ we find
$$
x=\frac{-k(k^2+a^2)+a(b-2ka)}{(k^2+a^2)^2+(b-2ka)^2},\quad y=\frac{-a(k^2+a^2)-k(b-2ka)}{(k^2+a^2)^2+(b-2ka)^2},\quad
v=\frac{-b+2ka}{(k^2+a^2)^2+(b-2ka)^2}.
$$

\begin{lemma}\label{lem-project-fan-o}
Suppose that $k>0$. The images of $F_k^{(o)}$ and $F_{-k}^{(o)}$ 
under vertical projection are, respectively, the sets given in polar coordinates $(r,\theta)$ by
$$
r\le \frac{1-\cos(\theta)}{2k}\quad\hbox{ and }\quad r\le\frac{1+\cos(\theta)}{2k}.
$$
In Cartesian coordinates, these are 
$$
y^2-4kx(x^2+y^2)-4k^2(x^2+y^2)^2\ge 0\quad \hbox{ and }\quad y^2+4kx(x^2+y^2)-4k^2(x^2+y^2)^2\ge 0.
$$
These are the interiors of two cardioids.
\end{lemma}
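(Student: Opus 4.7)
The plan is to derive the Cartesian inequality from the parameterization and then recognise it as the interior of a cardioid by conversion to polar form. I would begin by abbreviating $u = k^2 + a^2$ and $w = b - 2ka$, so that a point of $F_k^{(o)}$ projects to $(x,y)$ with $x = (-ku + aw)/(u^2+w^2)$ and $y = (-au - kw)/(u^2+w^2)$. A direct computation then gives the key identity $x^2 + y^2 = u/(u^2+w^2)$, which in particular forces $r \le 1/k$ on the image.

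Next I would establish the compact complex form $x + iy = -(k+ia)/(u - iw)$, which follows by expanding $-(k+ia)(u+iw)$ and dividing by $u^2+w^2$. Inverting gives $u - iw = -(k+ia)(x-iy)/r^2$, whose real and imaginary parts read $u = -(kx+ay)/r^2$ and $w = (ax-ky)/r^2$. Substituting $u = k^2 + a^2$ produces the quadratic $r^2 a^2 + ya + k(kr^2 + x) = 0$ in the parameter $a$; its discriminant is $y^2 - 4r^2 k(kr^2 + x)$, which (using $r^2 = x^2+y^2$) is exactly $y^2 - 4kx(x^2+y^2) - 4k^2(x^2+y^2)^2$. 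Hence the discriminant is non-negative precisely when the Cartesian inequality of the lemma holds, and in that case solving for $a$ and then $b = w + 2ka$ exhibits an explicit preimage in $F_k^{(o)}$. This identifies $\Pi_V(F_k^{(o)})$ with exactly the region cut out by the Cartesian inequality.

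To recognise this region as the interior of a cardioid, I would complete the square: using $y^2 = r^2 - x^2$, the inequality rearranges to $(x + 2kr^2)^2 \le r^2$, equivalently $|x + 2kr^2| \le r$. Since $k > 0$ and $r \ge 0$, the lower bound $-r \le x + 2kr^2$ is automatic, while the upper bound converts in polar form to $2kr + \cos(\theta) \le 1$, that is $r \le (1 - \cos(\theta))/(2k)$, the interior of the stated cardioid. The analogous argument for $F_{-k}^{(o)}$ uses $u = k^2 + a^2$ and $w = b + 2ka$ and produces $x + iy = (k - ia)/(u - iw)$; running the same steps yields the cardioid $r \le (1 + \cos(\theta))/(2k)$.

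I expect the main obstacle to be the bookkeeping around the complex identity $x + iy = -(k+ia)/(u-iw)$ and the subsequent quadratic solvability analysis; once those are in hand, the polar-to-Cartesian conversion and cardioid recognition are essentially trigonometric. A secondary point worth noting is that given any solution $a$ of the quadratic, the corresponding $b$ is uniquely determined by $b = 2ka + (ax - ky)/r^2$, so no extra constraint is needed to realise a preimage, and the equivalence between image and inequality is clean in both directions.
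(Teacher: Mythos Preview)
Your argument is correct, and it takes a genuinely different route from the paper. The paper locates the boundary cardioid by computing the Jacobian of the map $(a,b)\mapsto(x,y)$, finding that it vanishes along the curve $b=2ka+\tfrac{a}{k}(k^2+a^2)$, and then parameterises this critical curve by $a$ to identify it in polar form; the containment of the image in the region is deduced separately from the identity
\[
y^2-4kx(x^2+y^2)-4k^2(x^2+y^2)^2=\left(\frac{a(k^2+a^2)-k(b-2ka)}{(k^2+a^2)^2+(b-2ka)^2}\right)^2\ge 0.
\]
You instead invert the projection via the complex identity $x+iy=-(k+ia)/(u-iw)$ and reduce the existence of a preimage to the solvability of a real quadratic in $a$, whose discriminant is exactly the Cartesian quantity in the lemma. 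This proves both inclusions in one stroke and avoids any differential calculus. The paper's approach, on the other hand, makes the boundary parametrisation explicit, which feeds directly into the later identification of the extremal points \eqref{eq-extreme} on the fans. One small point you leave implicit: your inversion requires $r>0$, so the origin must be treated separately (it is only a limit point of $\Pi_V(F_k^{(o)})$, and the polar description $r\le(1-\cos\theta)/(2k)$ includes it trivially).
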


\begin{proof}
We want to find out the region in the $(x,y)$ plane which is the image of $F_k^{(o)}$ under vertical projection.
To do this, we consider local coordinates $(a,b)$ on $F_k^{(o)}$ and we seek points where the map 
$(a,b)\longmapsto (x,y)$ is not a local diffeomorphism. In other words, we need to find where the Jacobian $J$ of this map
vanishes. A calculation shows this Jacobian is
$$
J=\frac{-a(k^2+a^2)+k(b-2ka)}{((k^2+a^2)^2+(b-2ka)^2)^2}.
$$
Therefore, the Jacobian vanishes precisely when
$$
b=2ka+\frac{a}{k}(k^2+a^2).
$$
We can parameterise this curve using $a$ as:
$$
x=\frac{-k(k^2-a^2)}{(k^2+a^2)^2},\quad y=\frac{-2k^2a}{(k^2+a^2)^2},\quad v=\frac{-ka}{(k^2+a^2)^2}.
$$
Writing it in polar coordinates $x+iy=re^{i\theta}$ and eliminating $a$ gives 
$$
r^2=x^2+y^2=\frac{k^2}{(k^2+a^2)^2},\quad
x=r\cos(\theta)=\frac{k}{(k^2+a^2)}\left(1-\frac{2k^2}{(k^2+a^2)}\right).
$$
Using $k>0$ we see that $r=k/(k^2+a^2)$ and $2kr=1-\cos(\theta)$.

Similarly, in terms of $x$ and $y$ the region is given by
$$
y^2-4kx(x^2+y^2)-4k^2(x^2+y^2)^2
=\left(\frac{a(k^2+a^2)-k(b-2ka)}{(k^2+a^2)^2+(b-2ka)^2}\right)^2\ge 0.
$$

A similar argument holds for $F_{-k}^{(o)}$ by replacing $k$ with $-k$ throughout. Only here we have 
$$
x=r\cos(\theta)=\frac{k}{k^2+a^2}\left(\frac{2k^2}{k^2+a^2}-1\right)
$$
and so $2kr=1+\cos(\theta)$.
\end{proof}

\medskip

\begin{figure}[htbp]
  \includegraphics[width=0.7\textwidth]{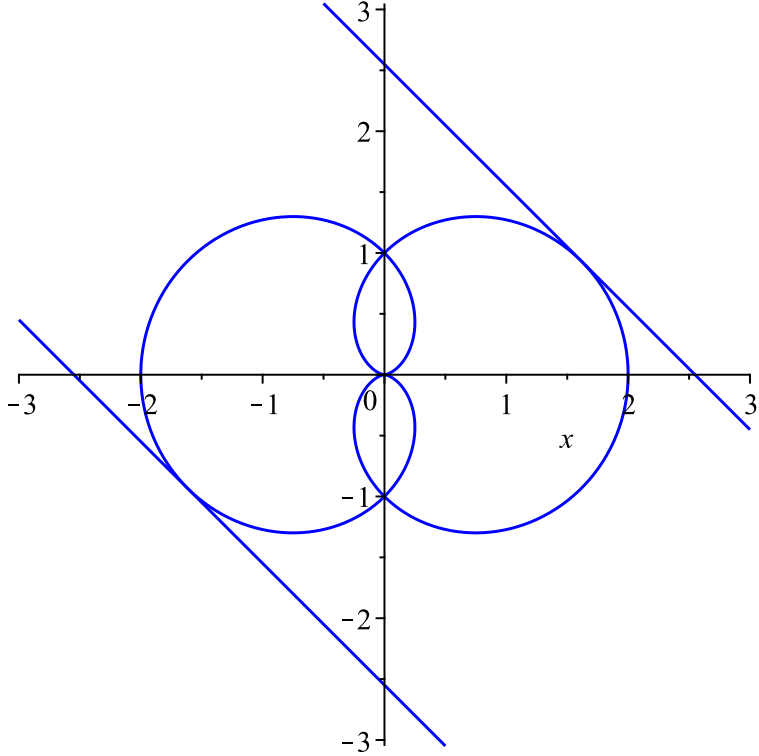}
  \caption{Cardiods which are the vertical projection of finite fans bounding a fundamental domain for $\langle B\rangle$
and strip which is the vertical projection of a fundamental domain for $\langle A\rangle$ bounded by two infinite fans. 
We need the cardioids to both lie in the strip.}\label{fig-2}
\end{figure}

For $k>0$ let $C_k$ and $C_{-k}$ be the cardioids given in polar coordinates $(r,\theta)$ and Cartesian coordinates $(x,y)$ by
\begin{eqnarray*}
C_k & = & \left\{(r,\theta)\ : \ r=\frac{1-\cos(\theta)}{2k}\right\}
=\left\{(x,y)\ :\  y^2-4kx(x^2+y^2)-4k^2(x^2+y^2)^2=0\right\}, \\
C_{-k} & = & \left\{(r,\theta)\ : \ r=\frac{1+\cos(\theta)}{2k}\right\}
=\left\{(x,y)\ :\  y^2+4kx(x^2+y^2)-4k^2(x^2+y^2)^2=0\right\}.
\end{eqnarray*}

\begin{lemma}\label{lem-cardiod-in-strip}
For each $k>0$ and each $\phi$ with $-\pi/2\le \phi\le \pi/2$, the cardioids $C_k$ and $C_{-k}$ are both contained 
in the strip $S_\phi$ given by
$$
S_{k,\phi}=\left\{(x,y)\ :\ \Bigl|x\cos(\phi)+y\sin(\phi)\Bigr| \le \frac{\cos^3(\phi/3)}{k}\right\}.
$$
\end{lemma}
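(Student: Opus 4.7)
The plan is to rewrite the condition in polar coordinates and reduce to a single-variable trigonometric optimization. A point $(x,y)=(r\cos\theta, r\sin\theta)$ satisfies
$$
x\cos(\phi)+y\sin(\phi)=r\cos(\theta-\phi),
$$
so for a point on $C_k$, using $1-\cos\theta=2\sin^2(\theta/2)$, I get
$$
x\cos(\phi)+y\sin(\phi)=\frac{\sin^2(\theta/2)\cos(\theta-\phi)}{k}.
$$
Setting $u=\theta/2$, the lemma for $C_k$ reduces to showing
$$
\bigl|\sin^2(u)\cos(2u-\phi)\bigr|\le \cos^3(\phi/3)\qquad\text{for all } u\in\mathbb{R},\ \phi\in[-\pi/2,\pi/2].
$$

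I would attack this by calculus. Let $f(u)=\sin^2(u)\cos(2u-\phi)$. Differentiating and using the identity $\sin(2u)\cos(2u-\phi)+\cos(2u)\sin(2u-\phi)=\sin(4u-\phi)$ together with $1-\cos(2u)=2\sin^2(u)$, the derivative should simplify to
$$
f'(u)=\sin(4u-\phi)-\sin(2u-\phi)=2\sin(u)\cos(3u-\phi).
$$
So the critical points are either $\sin(u)=0$, where $f(u)=0$, or $3u-\phi=\pi/2+n\pi$, i.e.\ $u=\phi/3+\pi/6+n\pi/3$. At the latter, $2u-\phi=-u+\pi/2+n\pi$, hence $\cos(2u-\phi)=(-1)^n\sin(u)$, so $|f(u)|=|\sin(u)|^3$.

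It remains to maximize $|\sin(u)|$ over the critical points $u=\phi/3+\pi/6+n\pi/3$ for $\phi\in[-\pi/2,\pi/2]$ (so $\phi/3\in[-\pi/6,\pi/6]$). Checking $n=0,1,2$, the largest value of $|\sin(u)|$ occurs at $u=\phi/3+\pi/2$ (the case $n=1$), where $\sin(u)=\cos(\phi/3)$. Hence $|f(u)|\le \cos^3(\phi/3)$, establishing the bound for $C_k$.

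For $C_{-k}$, note that $r=(1+\cos\theta)/(2k)=\cos^2(\theta/2)/k$, which is obtained from the formula for $C_k$ by the substitution $\theta\mapsto\theta+\pi$. Since $\cos(\theta+\pi-\phi)=-\cos(\theta-\phi)$, the quantity $|x\cos(\phi)+y\sin(\phi)|$ is unchanged, and the same bound $\cos^3(\phi/3)/k$ applies. The only slightly delicate step is simplifying $f'(u)$ to isolate the factor $\sin(u)\cos(3u-\phi)$ cleanly and then correctly identifying which branch of critical points achieves the global maximum on the restricted $\phi$-range; once that is done, the rest is routine.
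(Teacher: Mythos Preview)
Your proof is correct and follows essentially the same route as the paper: parametrize the cardioid in polar coordinates, reduce to maximizing $(1-\cos\theta)\cos(\theta-\phi)/2$, differentiate to find critical points where $\cos(3\theta/2-\phi)=0$, and identify the maximizing branch. Your substitution $u=\theta/2$ is cosmetic, and your symmetry argument $\theta\mapsto\theta+\pi$ for $C_{-k}$ is a slight shortcut over the paper, which repeats the calculation for $g_\phi(\theta)=(1+\cos\theta)\cos(\theta-\phi)/2$ separately.
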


\begin{proof}
(See Figure~\ref{fig-2}.) 
The cardioid $C_k$ is singular at the origin $r=0$. Using the polar parametrisation of $C_k$ we can parametrise its non-singular
points by $\theta\in(0,2\pi)$. A point on $C_k$ are given in parametric representation by
$$
\bigl(x(\theta),\,y(\theta)\bigr) = \left(\frac{(1-\cos(\theta))\cos(\theta)}{2k},\,\frac{(1-\cos(\theta))\sin(\theta)}{2k}\right).
$$
Consider
$$
f_\phi(\theta)=k\bigl(x(\theta)\cos(\phi)+y(\theta)\sin(\phi)\bigr)=\frac{(1-\cos(\theta))\cos(\theta-\phi)}{2}.
$$
Differentiating with respect to $\theta$ we have
\begin{eqnarray*} 
f'_\phi(\theta) & = & \frac{-(1-\cos(\theta))\sin(\theta-\phi)+\sin(\theta)\cos(\theta-\phi)}{2} \\
& = & \sin(\theta/2)\cos(3\theta/2-\phi).
\end{eqnarray*}
Since $0<\theta/2<\pi$ we see $\sin(\theta/2)\neq 0$ and so the maximum and minimum values of 
$f_\phi(\theta)$ occur when $3\theta/2-\phi=\pi/2+n\pi$ where $n$ is an integer.  That is, $\theta=2\phi/3+\pi/3+2n\pi/3$.
We have
$$
f_\phi(2\phi/3+\pi/3+2n\pi/3)=\cos^2(\phi/3+(n-1)\pi/3)\cos(\phi/3-(2n+1)\pi/3).
$$
Since $-\pi/2\le\phi\le\pi/2$ we see that the maximum value of $|f_\phi(\theta)|$ occurs when $n=1$, that is
when $\theta=2\phi/3+\pi$. The maximum value is
$$
|f_{\phi}(2\phi/3+\pi)|=\cos^3(\phi/3).
$$

Similarly for $C_{-2k}$. Here we need to maximise the absolute value of
$$
g_\phi(\theta)=\frac{(1+\cos(\theta))\cos(\theta-\phi)}{2}.
$$
This occurs when $\theta=2\phi/3$. The maximum value is
$$
|g_{\phi}(2\phi/3)|=\cos^3(\phi/3).
$$
\end{proof}

\medskip

If we have equality in Lemma~\ref{lem-cardiod-in-strip}, we see that the points on the fans $F_k^{(o)}$ and $F_{-k}^{(o)}$
that project to points on the boundary of the strip are, respectively:
\begin{equation}\label{eq-extreme}
\left(\begin{matrix} 
-\cos^3(\phi/3)e^{i\phi/3}/k^2 \\
-\sqrt{2}\cos^2(\phi/3)e^{2i\phi/3}/k \\
1
\end{matrix}\right), \quad
\left(\begin{matrix} 
-\cos^3(\phi/3)e^{i\phi/3}/k^2 \\
\sqrt{2}\cos^2(\phi/3)e^{2i\phi/3}/k \\
1
\end{matrix}\right).
\end{equation}

\begin{proof}(Theorem~\ref{thm-main0}(4))
We now prove the condition (4) of Theorem~\ref{thm-main0}.
Conjugating $A$ and $B$ by a Heisenberg rotation fixing $o$ and $\infty$, if necessary, we assume that $\theta_2=0$.

It is clear that (independent of $t_1$) the map $A$ sends $F_{{-\frac{s_1}{2}}e^{i\theta_1}}^{(\infty)}$ to 
$F_{{\frac{s_1}{2}}e^{i\theta_1}}^{(\infty)}$:
$$
\left(\begin{matrix} 1 & -\sqrt{2}s_1e^{-i\theta_1} & -s_1^2+it_1 \\ 0 & 1 & \sqrt{2}s_1e^{i\theta_1} \\ 0 & 0 & 1 \end{matrix}\right)
\left(\begin{matrix} -\frac{s_1^2}{4}-a^2+ib-is_1a \\ \sqrt{2}(-\frac{s_1}{2}+ia)e^{i\theta_1} \\ 1 \end{matrix}\right)
=\left(\begin{matrix} -\frac{s_1^2}{4}-a^2+ib-ias_1+it_1 \\ \sqrt{2}(\frac{s_1}{2}+ia)e^{i\theta_1} \\ 1 \end{matrix}\right)
$$
Therefore the slab bounded by these two fans is a fundamental region for $\langle A\rangle$. This slab is
$$
D_A=\left\{(x+iy,v)\in {\mathfrak N}\ : \ \bigl| x\cos(\theta_1)+y\cos(\theta_1)\bigr|\le \frac{s_1}{2} \right\}\cup\{\infty\}.
$$
The image of $D_A$ under vertical projection is the strip 
$$
S_A=\left\{(x,y)\ :\ 
\bigl| x\cos(\theta_1)+y\cos(\theta_1)\bigr|\le \frac{s_1}{2} \right\}.
$$

Similarly, (independent of $t_2$) the map $B$ sends the fan $F_{-\frac{s_2}{2}}^{(o)}$ to the fan $F_{\frac{s_2}{2}}^{(o)}$:
$$
\left(\begin{matrix} 1 & 0 & 0 \\ \sqrt{2}s_2 & 1 & 0 \\ -s_2^2+it_2 & -\sqrt{2}s_2 & 1 \end{matrix}\right)
\left(\begin{matrix} 1 \\ \sqrt{2}(-\frac{s_2}{2}+ia) \\ -\frac{s_2^2}{4}-a^2+ib-is_1a \end{matrix}\right)
=\left(\begin{matrix} 1 \\ \sqrt{2}(\frac{s_2}{2}+ia) \\ -\frac{s_2^2}{4}-a^2+ib -ias_2+it_2\end{matrix}\right).
$$
That is, $\iota B\iota$ sends the fan $F_{-\frac{s_2}{2}}^{(\infty)}$ to the fan $F_{\frac{s_2}{2}}^{(\infty)}$.
So the slab $D_{\iota B\iota}$ bounded by these two fans is a fundamental domain for $\langle \iota B\iota\rangle$, where
$$
D_{\iota B\iota}=\left\{(x+iy,v)\in{\mathfrak N}\ :\ 
\bigl| x\bigr|\le \frac{s_2}{2} \right\}\cup\{\infty\}.
$$
Thus the image of $D_{\iota B\iota}$ under $\iota$ is a fundamental domain for $\langle B\rangle$. This consists
of all points in the exterior of the fans $F_{-\frac{s_2}{2}}^{(o)}$ and $F_{\frac{s_2}{2}}^{(o)}$.

In order to show that $\langle A,B\rangle$ is free it is sufficient to show that the fans $F_{\pm \frac{s_2}{2}}^{(o)}$ are contained 
in the slab $D_A$ between $F_{{-\frac{s_1}{2}}e^{i\theta_1}}^{(\infty)}$ to $F_{{\frac{s_1}{2}}e^{i\theta_1}}^{(\infty)}$. 
It suffices to show that the vertical projections 
of  $F_{\pm \frac{s_2}{2}}^{(o)}$ are contained in the strip $S_A$. The vertical projections of these fans are given in 
polar coordinates $(r,\theta)$ by
$$
r\le \frac{1-\cos(\theta)}{s_2},\quad  r\le \frac{1-\cos(\theta)}{-s_2}.
$$
Using Lemma~\ref{lem-cardiod-in-strip} with $\phi=\theta_1$ and $k=s_2/2$  (see Figure~\ref{fig-2}) we see that this is true
provided
$$
\frac{\cos^3(\frac{\theta_1}{3})}{s_2/2}\le \frac{s_1}{2}.
$$
The result follows.
\end{proof}

\medskip

Plugging in $k=s_2/2$ and $\phi=\theta_1$ into the formulae for the two extreme points \eqref{eq-extreme}, 
we see that $B$ maps one to the other
if and only if $\theta_1=0$ and $t_2=0$. Similarly, $A$ maps one to the other if and only if $s_1s_2=4$, $\theta_1=0$ and $t_1=0$.


\begin{thebibliography}{999}
	
	\bibitem[Gol]{gol}
	William M.~Goldman, 
	\newblock \emph{Complex hyperbolic geometry}, 
	\newblock Oxford University Press, Oxford, New York, (1999).
	
	\bibitem[GP1]{gp1} 
	William M.~Goldman and John R.~Parker,
	\newblock \emph{Complex hyperbolic ideal triangle groups},
	\newblock J. Reine Angew. Math,
	\newblock {\bf 425} (1992), 71--86. 

	\bibitem[GP2]{gp2} 
	William M.~Goldman and John R.~Parker,
	\newblock \emph{Dirichlet polyhedra for dihedral groups acting on complex hyperbolic space},
	\newblock J. of Geom. Anal.,
	\newblock {\bf 2} (1992), 517--554.
		
	\bibitem[Ign]{ig78}
	Yu.~A. Ignatov,
	\newblock \emph{Free and non-free subgroups of ${\rm PSL}_2(\mathbb C)$ generated by two parabolic elements},
	\newblock Math.\ of the USSR Sbornik,
	\newblock {\bf 35} (1979) 49--55.
	
	\bibitem[Kam]{kam1}
	Shigeyasu Kamiya,
	\newblock \emph{On discrete subgroups of $\PU(1,2;\C)$ with Heisenberg translations}, 
	\newblock J. London Math. Soc. 
	\newblock {\bf 62} (2000), 827--842.
	
	\bibitem[KS]{ks}
	Linda Keen and Caroline Series,
	\newblock \emph{The Riley slice of Schottky space}, 
	\newblock Proc. London Math. Soc, 
	\newblock {\bf 69} (1994), 72--90.
	
	\bibitem[LU]{lyu} 
	R. C. Lyndon and J. L. Ullman,
	\newblock \emph{Groups generated by two parabolic linear fractional transformations},
	\newblock Canad. J. Math. 
	\newblock {\bf 21} (1969), 1388--1403.
	
	\bibitem[MPP]{mpa}
	Andrew Monaghan, John R.~Parker and Anna Pratoussevitch, 
	\newblock \emph{Discreteness of ultra-parallel complex hyperbolic triangle groups of type $[m_1, m_2, 0]$}, 
	\newblock J. London Math. Soc.(2), 
	\newblock {\bf 100} (2019), 545--567. 
	
	\bibitem[Par1]{par2}
	John R.~Parker,
	\newblock \emph{Shimizu's lemma for complex hyperbolic space}, 
	\newblock International Journal of Mathematics, 
	\newblock {\bf 3,No 2} (1992), 291--308.
	
	\bibitem[Par2]{par1}
	John R.~Parker,
	\newblock \emph{On Ford isometric spheres in complex hyperbolic space}, 
	\newblock Math. Proc. Camb. Phil. Soc. 
	\newblock {\bf 115} (1994), 501--512.
	
	\bibitem[PW]{PW}
    John R.~Parker and Pierre Will,
    \newblock \emph{A complex hyperbolic Riley slice}, 
    \newblock Geom. Topol., 
    \newblock {\bf 21(6)} (2017),3391--3451.


	\bibitem[WG]{wg}
	Justin Wyss-Gallifent,
	\newblock \emph{Complex hyperbolic Triangle groups}, 
	\newblock Ph.D. Thesis, University of Maryland, 2000.
	
	\bibitem[XJ]{xyj}
	BaoHua Xie and YuePing Jiang,
	\newblock \emph{Groups generated by two elliptic elements in $\PU(2, 1)$}, 
	\newblock In Linear Algebra Appl.
	\newblock {\bf 433} (2010), no. 11–12, 2168--2177.
	
	\bibitem[XWJ]{xwy}
	BaoHua Xie, JieYan Wang, and YuePing Jiang,
	\newblock \emph{Free Groups Generated by Two Heisenberg Translations},
	\newblock In Canad. Math. Bull. 
	\newblock {\bf 56} (2013), 881--889. 
		
	
	
	
\end{thebibliography}
\end{document}